\newcommand{\pd}{\partial}
		{
			{

				\newcommand{\et}{\textit{et al.} }

				\newtheorem{thm}{Theorem}[section]

				\newtheorem{rem}[thm]{Remark}
				
				\numberwithin{equation}{section}
				\newtheorem{example}{Example}[section]
				\parindent 0pt

				\title[Semi-Analytical Methods for solving Population Balance models]{\textbf{Semi-Analytical Methods for Population Balance models involving Aggregation and Breakage processes: A comparative study }}
				
				\author[Shweta, Saddam Hussain and Kumar]{Shweta$^{\dag}$,  Saddam Hussain$^{\dag, *}$, and  Rajesh Kumar$^{\dag}$}
				\thanks{$^\dag$Department of Mathematics,
					Birla Institute of Technology and Science, Pilani, Rajasthan-333031, India.\\
					$^\ast$Corresponding author: Email: p20200438@pilani.bits-pilani.ac.in\\
						Shweta: Email: p20210073@pilani.bits-pilani.ac.in\\
				 Rajesh Kumar: Email: 	rajesh.kumar@pilani.bits-pilani.ac.in.}
				
\begin{document}
					\maketitle
					\begin{quote}
						{\textit{ Abstract:
								Population balance models often integrate fundamental kernels, including sum, gelling  and Brownian aggregation kernels. These kernels have demonstrated extensive utility across various disciplines such as aerosol physics,  chemical engineering,  astrophysics,  pharmaceutical sciences  and mathematical biology for the purpose of elucidating particle dynamics. The objective of this study is to refine the semi-analytical solutions derived from current methodologies in addressing the nonlinear aggregation and coupled aggregation-breakage population balance equation. This work presents a unique semi-analytical approach based on the homotopy analysis method (HAM) to solve  pure aggregation and couple aggregation-fragmentation population balance equations, which is an integro-partial differentia	equation. By decomposing the non-linear operator, we investigate how to utilize the convergence control parameter to expedite the convergence of the HAM solution towards its precise values in the proposed method.
}}
					\end{quote}
					\textit{Keywords: Coagulation equation; Coagulation-breakage Equation; Semi-analytical techniques; Homotopy Analysis Method, Accelerated Homotopy Analysis Method .}
						\section{Introduction}
						
					The particulate process has significant impacts on Population Balance Equations (PBEs). It alters the physical features such as changes in size, composition, enthalpy and porosity of two or more particles. These processes are distinguished by continuous and dispersed phases, each containing particles with distinct characteristics. The particles may be droplets, crystals, or bubbles. Population balance models have been utilized in the past few decades to address diverse industrial-scale applications, including flame synthesis of materials and aerosols, crystallization, emulsions, polymerization/depolymerization,  sprayed fluidized bed granulation, and twin-screw granulation. Notable applications of these processes, which are simulated in real-world settings using PBEs, include protein filament division \cite{tournus2021insights}, milling processes \cite{wang2021multiscale}, fibrin clot formation \cite{rukhlenko2015mathematical} and magnetic nanoflower aggregation in biofluids \cite{neofytou2021simulation}. The characteristics exhibited by particles in the course of particulate processes are subject to the influence of various mechanisms, specifically growth, nucleation, aggregation and fragmentation. These mechanisms are systematically quantified and delineated through the utilization of number density functions. Our study seeks to elucidate the number density functions arising from particle coagulation (aggregation), fragmentation (breakage), and combined coagulation and fragmentation. Aggregation refers to the event of merging two or more particles to create a more extensive particle. The utilization of the aggregation process is evident in multiple engineering applications, such as colloidal processing, granulation, the flame synthesis of materials, and polymerization. Coalescence occurs when several particles, such as liquid droplets, combine to form a single particle. In contrast, fragmentation is an irreversible  phenomenon  wherein particles undergo disintegration into smaller-sized entities due to collisions or external forces.
					Notably, the aggregate  volume or mass remains invariant during these operations to ensure the constancy of the overall material quantity within the system. Smoluchowski \cite{smoluchowski1917mathematical} initially presented a discrete version of the coagulation equation to analyze the Brownian motion of particles.
					Over the years, considerable attention has been devoted by numerous researchers to both the continuous and  discrete mutations of Smoluchowski's equations (see \cite{xie2022solution, laurenccot2018uniqueness} and references cited therein). The equation in the continuous setting is as follows \cite{filbet2004numerical}
					\begin{align}\label{agg}
						\frac{\partial c(s,\tau)}{\partial \tau}=\frac{1}{2}\int_{0}^{s}w(s-\xi,\xi)c(s-\xi,\tau)c(\xi,\tau)d\xi-\int_{0}^{\infty}w(s,\xi)c(s,\tau)c(\xi,\tau)d\xi,\quad \tau \in[0,T], s,\xi \in(0,\infty)
					\end{align}
					with an initial condition $c(s,0)  = c_{0}(s)\geq 0.$ 
					The equation (1.1) refers to the temporal variation in concentration of particles $c(s,\tau)$ during the aggregation process. Concurrently, $w(s,\xi)$ signifies the rate at which particles with sizes $s$ and $\xi$ combine, resulting in the formation of particles with size $s+\xi$. The first component in the right-hand side of equation \eqref{agg} represents the generation of particles of size $s$, originating from the collision between particles of volume $s-\xi$ and $\xi$.
					The subsequent term signifies the cessation of particles with a size of $s$ from the system. The study is also extended to encompass a coupled aggregation-fragmentation  process and the mathematical model is presented as follows \cite{mccoy2003analytical}:
					\begin{align}\label{agg-brk}
						\frac{\partial c(s,\tau)}{\partial \tau}=&\frac{1}{2}\int_{0}^{s}w(s-\xi,\xi)c(s-\xi,\tau)c(\xi,\tau)d\xi-\int_{0}^{\infty}w(s,\xi)c(s,\tau)c(\xi,\tau)d\xi+\int_{s}^{\infty}\beta(s,\xi)\mathcal{S}(\xi)c(\xi,\tau)d\xi \nonumber
						\\&-\mathcal{S}(s)c(s,\tau).
					\end{align}
					The last two terms of the equation describe the process of breakage. The term $\beta(s,\xi)$ in the equation represents the breakage kernel, which explains the creation of particle $s$ from particle $\xi$. On the other hand, the selection rate at which a particle of size $s$ is chosen to break is denoted by $S(s)$. The third term in the equation indicates the formation of a particle of size $s$, while the last term gives the disappearance of particle $s$.
					\par In addition to the number density $c(s,\tau)$, certain integral properties, including moments, gain attraction due to their physical representation. The $j^{th}$ moment corresponding to the number density  distribution is specified as 
					\begin{align}\label{int}
						n_{j}(\tau) = \int_{0}^{\infty} s^{j} c(s,\tau) ds, \quad j=1,2,3,\cdots.
					\end{align}
					The zeroth order moment $n_{0}(\tau)$ represents the total number of particles in the system at time $\tau$. The first order moment $n_{1}$ corresponds to the system's total mass and the second order moment $n_{2}$ provides the energy dissipation in the system. 
					\subsection{State-of-the Art and Contribution}
					Many researchers have extensively explored these models, delving into both theoretical and numerical aspects, driven by their intricate structure and diverse applications across various fields. Notably, in recent years, high-speed computing has significantly spurred the development of numerical, analytical, and semi-analytical approaches for solving PBEs in various forms.  Due to the complex nature of these models and the lack of analytical solutions except for a few simple cases, several semi-analytical and numerical approaches are used to solve these problems approximately.
					\par Numerical methods for solving the coupled breakage-aggregation model \eqref{agg-brk} include the quadrature method of moments \cite{marchisio2003quadrature}, finite element method \cite{bie2018coupling}, finite volume scheme \cite{singh2019new, filbet2004mass, kumar2014convergence}, fixed pivot technique \cite{giri2013convergence}, fast Fourier transformation method \cite{ahrens2018fft}, and references therein. Furthermore, in  \cite{lee2000simultaneous}, authors addressed the coagulation accompanied by simultaneous binary breaking by applying a stochastic approach. Mahoney et al. applied the finite element approach for nucleation, growth, and aggregation equations in \cite{mahoney2002efficient}. Ranjbar et al. \cite{ranjbar2010numerical} used radial basis functions and Taylor's polynomials to solve \eqref{agg} numerically. All of these numerical techniques exhibit limitations owing to their reliance on non-physical assumptions, including discretization, linearization, and sets of basis functions. As the problem is nonlinear, readers are directed to \cite{singh2022challenges} for an overview of the difficulties of using these numerical methods in solving PBEs.
					\par 
					To address these limitations, many authors have expressed a keen interest in solving the ordinary, partial, and integro-partial differential equations through the application of semi-analytical techniques (SATs), see \cite{temimi2015computational, hussain2023semi, temimi2011approximate, temimi2016time} and references therein. Recently, several authors have developed interest in SATs to study PBEs that include Adomian decomposition method (ADM), Homotopy analysis method (HAM), Homotopy
					perturbation method (HPM). Singh \et \cite{singh2015adomian} explored ADM for constant, sum, and product coagulation kernels, i.e,  $w(s,\xi)=1, s+\xi, s\xi$ with initial guess $c_{0}=e^{-s}$, $c_{0}=\frac{e^{-s}}{s}$ and established the convergence analysis for the constant kernel.
					Kaur \et \cite{kaur2019analytical} addressed the convergence of a series solution calculated via HPM for pure aggregation equation. Further, authors in \cite{kaur2022approximate} discussed the HAM for the pure breakage and pure coagulation equations but no improvement is observed in the solution as authors take convergence control parameter $h=-1$, giving the same results as ADM \cite{singh2015adomian}.
					Kaushik and Kumar \cite{kaushik2023novel} examined the pure aggregation equation using optimized decomposition method (ODM). Further, they used Laplace transform based ODM for solving the breakage and coagulation equations \cite{kaushik2023laplace}.
					\subsection{Motivation}
					Studying PBEs is prompted by their inherent persistence in natural and industrial contexts. These ubiquitous processes have wide-ranging implications, from the coalescence of droplets in cloud formation to the formation of aggregates in pharmaceutical manufacturing.
					 Identifying and modeling these processes play a pivotal role in optimizing the behavior of intricate systems. 
					The HAM, designed to address integral and differential equations, and has been successfully utilized for many dynamical problems \cite{liao1992proposed, liao1997kind, liao1999explicit}. Unlike classic non-perturbation techniques, HAM does not depend on a specific basis function or other small/large physical parameters. It provides a more direct approach that ensures quick convergence of iterative series solutions. The primary advantage of HAM is its versatility in terms of convergence rate and region \cite{liao2003beyond}. Therefore, the first goal of this work is to improve the HAM for non-linear equations. In summary, we chose the accelerated homotopy analysis method \cite{hussain2023semi} because it is adaptable, numerically efficient, and holds promise for exploring complex real-world systems.  AHAM provides a comprehensive set of auxiliary functions, auxiliary linear operator,  initial approximations, and convergence control parameters. This versatility drives us to use the method effectively and quickly to solve aforementioned equations. The  AHAM is introduced to solve the aggregation and coupled aggregation-breakage equations and conduct a comparative analysis with existing methods. The algorithm's convergence analysis and error estimates for the aggregation equation are also provided. Four distinct types of kernels are taken into consideration: constant, sum, product, and Brownian kernel. Our subsequent objective involves implementing the proposed methodology to address the coupled aggregation-breakage equation. Additionally, our objective is to perform a convergence analysis for a nonlinear coupled aggregation-breakage equation by employing an alternative approach incorporating contraction mapping in the Banach space.
					\par	The subsequent sections of this article are structured as follows: Section 2 provides a concise elucidation of the HAM and AHAM. Section 3 outlines the application of AHAM in the context of aggregation and
					coupled aggregation-breakage equations. This section also furnishes an comprehensive analysis of the proposed scheme's convergence concerning the issues mentioned earlier. In Section 4, the numerical implemention of the proposed scheme is expounded. Proceeding further, five examples of aggregation and two examples of coupled aggregation-fragmentation equations are considered, and the resultant findings are systematically compared with existing methods ADM/HPM/HAM as well as ODM/LODM against exact analytical outcomes. Lastly, Section 5 summarizes the observations and discussions.
		\section{ Methodology}
		This section delves into the basics of essential SATs used to solve non-linear differential equations. Take into account the general non-linear differential equation
			\begin{align}\label{1}
				\mathcal{D}[c'(s,\tau),c(s,\tau)]=0,
			\end{align}
		with initial condition
			\begin{align}\label{2}
		c(s,0)=c_{0}(s).
		\end{align}
	Here $c(s,\tau)$ is an unknown function and $c'(s,\tau)=\frac{\partial c}{\partial \tau}$.
	\subsection{HAM}
Let us consider the general differential equation \eqref{1}. Subsequently, in accordance with the methodology proposed by HAM \cite{liao2003beyond}, a homotopy is constructed as follows: 
			\begin{equation}\label{3}
				(1-r)L\left[\Psi(s,\tau ; r)-c_{0}(s,\tau)\right]=r h H \mathcal{D}\left(\Psi^{\prime}(s,\tau ; r), \Psi(s,\tau ; r)\right),
			\end{equation}
	where $r\in[0,1]$ is an embedding parameter, $h$ is a non-zero convergence control parameter, $H(t)\neq0$ is an auxiliary function, $L$ is an linear operator and $c_{0}(s,\tau)$ is an initial condition. It is evident from \eqref{3} that as $r$ varies from 0 to 1, $\Psi(s,\tau ; r)$ transitions  from initial estimate $c_{0}(s,\tau) = \Psi(s,\tau ; 0)$ to the exact solution $ c(s,\tau) = \Psi(s,\tau ; 1)$.
	The fundamental concept underlying the HAM is that the solution to equation \eqref{3} can be expressed in the form of a power series in the variable $ r $, as follows:
				\begin{equation}\label{4}
					\Psi(s,\tau ; r)=\sum_{k=0}^{\infty} r^{k}\mu_{k}(s,\tau).
				\end{equation}
	With the appropriate selection of auxiliary parameters, the series given by equation \eqref{4} converges at $r=1$,  providing
\begin{equation}\label{5}
		c(s,\tau)=\Psi(s,\tau ; 1) =\sum_{k=0}^{\infty}\mu_{k}(s,\tau),
	\end{equation}
which must correspond to the solutions of the original nonlinear equation, as proved by Liao \cite{liao2004homotopy}. To obtain the individual components, substitute \eqref{4} in the equation \eqref{3} and compare the coefficients of the corresponding powers of $r$. This results in the zeroth-order deformation equation as
$$  L\left[\mu_{0}-c_{0}\right]=0,$$
and the high order deformation equations
\begin{equation}\label{6}
	 L\left[\mu_{i+1}\right] =L\left[\mu_{i}\right] + hH(t){Q_i} ,
\end{equation}
 where $Q_i$ is the homotopy polynomials for $ i \geq 0$ and can be determined as 
$$ Q_i = \frac{1}{i!}\frac{\partial^i}{\partial r^i}\mathcal{D}\bigg( \sum_{j=0}^{\infty}r^j\mu_j^{\prime}(\tau), \sum_{j=0}^{\infty}r^j\mu_j(\tau)\bigg)\bigg|_{r=0} .$$
		\subsection{Accelerated Homotopy Analysis Method}
			Let us consider the differential equation \eqref{1} into the form	
			\begin{equation}\label{7}
			\frac{\partial c}{\partial \tau}+L[c(s,\tau)]+M[c(s,\tau)]=g(s,\tau) ,
			\end{equation}
		where $L $ and $ M$ represent linear and non-linear operators, respectively, and $ g(s,\tau)$ is a source function with  initial guess $c(s,0)=c_{0}(s)$.
			Now, following the HAM, we formulate a homotopy as follows
			\begin{equation}\label{8}
			(1-r)\left[\frac{\partial \Psi(s,\tau ; r)}{\partial \tau}-\frac{\partial c_{0}}{\partial \tau}\right]-r h H\bigg[\frac{\partial \Psi(s,\tau ; r)}{\partial \tau}+L[\Psi(s,\tau ; r)]+M[\Psi(s,\tau ; r)]-g(s,\tau)\bigg]=0.
		\end{equation}
Similar to the HAM, here as well, $ r \in [0,1]$ represents embedding parameter, $h\neq0$ is the convergence control parameter and as $r$ varies
	from 0 to 1, $\Psi(s,\tau ; r)$ changes from the initial estimate $c_{0}(s) = \Psi(s,\tau ; 0)$ to the
	exact solution $ c(s,\tau)=\Psi(s,\tau ; 1)$, of the above defined  problem \eqref{7} with $H=1$.
The main idea behind accelerating HAM is to deal with the non-linear parameter $M$.
Let 
	\begin{equation}\label{10}
	M [c(s,\tau)]=\sum_{k=0}^{\infty}H_{k}r^{k},
\end{equation}
where $H_{k}$ are accelerated He's polynomial \cite{jasrotia2022accelerated} defined as
	\begin{align}\label{11}
H_{k}(\mu_{1},\mu_{2},\ldots,\mu_{k})=M\bigg(\sum_{j=0}^{k}\mu_{j}\bigg)-\sum_{j=0}^{k-1}H_{j},
\end{align}
with $H_{0}=M(c_{0})$.
Now, using \eqref{4} and \eqref{10}, \eqref{8} reduces to
	\begin{align}\label{12}
	(1-r)\left[\frac{\partial }{\partial \tau}(\sum_{k=0}^{\infty}\mu_{k}r^k)-\frac{\partial \mu_{0}}{\partial \tau}\right]-rH h \bigg[\frac{\partial }{\partial \tau}(\sum_{k=0}^{\infty}\mu_{k}r^k)+L(\sum_{k=0}^{\infty}\mu_{k}r^k)+H_{k}r^{k}-g(s,\tau)\bigg]=0.
\end{align}
	By collecting and comparing the coefficients of the same powers of $r$, likewise $r^{0},r^{1},\ldots,$ we obtain
	\begin{align}\label{iter}
		r^{0} &: \frac{\partial \mu_{0}}{\partial\tau}-\frac{\partial c_{0}}{\partial\tau}=0, \nonumber \\
				r^{1} &: \frac{\partial \mu_{1}}{\partial\tau}-h\bigg[\frac{\partial \mu_{0}}{\partial\tau} +L\mu_{0}(s,\tau)+H_{0}-g(s,\tau)\bigg]=0,\nonumber \\
					r^{2} &: \frac{\partial \mu_{2}}{\partial\tau}-\frac{\partial \mu_{1}}{\partial\tau}-h\bigg[\frac{\partial \mu_{1}}{\partial\tau} +L\mu_{1}(s,\tau)+H_{1}\bigg]=0, \nonumber \\ 
					&\vdots \nonumber \\
					r^{k} &: \frac{\partial \mu_{k}}{\partial\tau} -\frac{\partial \mu_{k-1}}{\partial\tau}-h\bigg[\frac{\partial \mu_{k-1}}{\partial\tau} +L\mu_{k-1}(s,\tau)+H_{k-1}\bigg]=0. 
			\end{align}
		The components of the series solution $\sum_{k=0}^{\infty}\mu_{k}(s,\tau)$, which contain the convergence control parameter $h$, can be obtained from the above relation. The optimal value of $h$ will provide the best desired approximation to the non-linear problem \eqref{7}.
\begin{rem}
The convergence control parameter plays a crucial role in influencing both the convergence rate and the accuracy of the AHAM solutions. Hence, an optimal selection of this auxiliary parameter is required. Several strategies for obtaining the optimal value of h are available in the literature, such as those presented in Liao's works \cite{liao2003beyond, liao2004homotopy}. The discrete square residual technique is used throughout this study to determine the best appropriate value of $h$. The residual error is
	\begin{align}\label{14}
		Res(s,\tau)=	\frac{\partial \psi_{k}(s,\tau)}{\partial \tau}+L\psi_{k}(s,\tau)+M\psi_{k}(s,\tau)-g(s,\tau),
	\end{align}
where $$ \psi_{k}(s,\tau)= \sum_{i=0}^{k}\mu_{i}(s,\tau), $$ is the truncated series solution and the corresponding error function $E[h]$ which requires minimization, is given by:
\begin{align}\label{15}
E(h)= \frac{1}{K^2}\sum_{i=0}^{K}\sum_{i=0}^{K}Res^2(s_{i},\tau_{i}),
\end{align}
where $K$ represents an integer. It is observed that $E(h)$ consists an unknown parameter $h$. As $E(h)$ approaches zero, the solution for AHAM converges more quickly. To ascertain the optimal value of 
$h$, the "Minimize" command in the symbolic computation program "MATHEMATICA" is employed. 
\end{rem}
\begin{rem}
	The AHAM offers various options, including diverse initial approximations, auxiliary functions, auxiliary linear operators, and convergence control parameters. This flexibility empowers us to effectively and efficiently address non-linear problems. The proposed approach yields an efficient and rapidly converging solution, providing significantly greater accuracy compared to other techniques available in the literature.
\end{rem}
\section{Development of the method for Aggregation and coupled aggregation-fragmentation Equations  } 
In this section, the derivation of the pure aggregation and coupled aggregation-fragmentation problems. Furthermore, the study thoroughly investigates and discusses the convergence criteria and error bounds are provided for the proposed AHAM in solving both the models. The theoretical convergence of AHAM with maximum error bounds is performed in the Banach space via contraction mapping.
\subsection{AHAM Implementation for Pure Aggregation } 
 On comparing \eqref{agg} and \eqref{7}, we have $g(s,\tau)=0$, $L[c(s,\tau)]=0$ and $M[c(s,\tau)]$= -RHS of equation \eqref{agg}.
Then the homotopy is constructed as 
\begin{align*}
	(1-r)\left[\frac{\partial \Psi(s,\tau ; r)}{\partial \tau}-\frac{\partial c_{0}}{\partial \tau}\right]-&r h \bigg[\frac{\partial \Psi(s,\tau ; r)}{\partial \tau}-\frac{1}{2}\int_{0}^{s}w(s-\xi,\xi)c(s-\xi,\tau)c(\xi,\tau)d\xi\\&+\int_{0}^{\infty}w(s,\xi)c(s,\tau)c(\xi,\tau)d\xi\bigg]=0.
\end{align*}
In view of \eqref{11}, $H_{k}$'s for the non-linear aggregation equation \eqref{agg} are defined as
\begin{align}\label{16}
H_{k}(\mu_{1},\mu_{2},\dots,\mu_{k})=&-\frac{1}{2}\int_{0}^{s}w(s-\xi,\xi)\sum_{i=0}^{k}\mu_{i}(s-\xi,\tau)\sum_{i=0}^{k}\mu_{i}(\xi,\tau)d\xi \nonumber \\&+\int_{0}^{\infty}w(s,\xi)\sum_{i=0}^{k}\mu_{i}(s,\tau)\sum_{i=0}^{k}\mu_{i}(\xi,\tau)d\xi,
\end{align}
with $ H_{0}=M[c_{0}]$. By following the same procedure as we did in previous section, iterations to solve the above problem are
 	\begin{align}\label{iteragg}
  \mu_{0}=& c_{0}(s), \nonumber \\
  \mu_{1}=&\int_{0}^{\tau}h\bigg[\frac{\partial \mu_{0}}{\partial\rho} +H_{0}\bigg]{d\rho}, \nonumber \\
 	&\vdots \nonumber \\
   \mu_{i}=&\int_{0}^{\tau}\bigg(\frac{\partial \mu_{i-1}}{\partial\rho}+h\bigg[\frac{\partial \mu_{i-1}}{\partial\rho}+ H_{i-1}\bigg]\bigg)  {d\rho}, 2\leq i\leq k.
 \end{align}
These series solutions components include the convergence control parameter, which is determined using the discrete square residual error approach mentioned in Remark 2.1. Then the solution to the problem \eqref{agg} is presesnted as
	\begin{align*}
	c(s,\tau)=\lim_{r \to 1} \Psi(s,\tau,r) = \sum_{k=0}^{\infty}\mu_{k}(s,\tau),
\end{align*}
and for approximation, $k$-th order truncated series solution is provided  as
$\psi_{k}=\sum_{i=0}^{k}\mu_{i}(s,\tau).$
\subsection{Convergence Analysis for Pure Aggregation Equation}
To establish the convergence analysis of the proposed method, consider the Banach space $\mathbb{X}_{1}= (\mathcal{C}(\mathcal{L}^{1}[0 , \infty):[0 ,T])  , \|.\|)$  with the norm defined by
\begin{align}\label{NORM}
\|c\|	= \sup_{\tau\in [0,\tau_{0}]} \int_{0}^{\infty}\left|c(s,\tau)  \right|ds < D. 
\end{align}
Let us write equation \eqref{agg} into the operator form as
\begin{align}\label{17}
	c(s,\tau) = \mathcal{A}[c],
\end{align}
where $\mathcal{A}[c]$ is a non-linear operator defined by
\begin{align}\label{18}
	 \mathcal{A}[c]= c_{0}(s)+ \int_{0}^{\tau}A(c)d\rho.
\end{align}
where $$A(c)= \frac{1}{2}\int_{0}^{s}w(s-\xi,\xi)c(s-\xi,\rho)c(\xi,\rho)d\xi-\int_{0}^{\infty}w(s,\xi)c(s,\rho)c(\xi,\rho)d\xi$$
To demonstrate the contractive nature of the non-linear operator $A$, the following equivalent form of equation \eqref{18} is taken 
\begin{align*}
\frac{\partial }{\partial\tau}\bigg(c(s,\tau)\exp[G(s,\tau,c)]\bigg)=\frac{1}{2}\exp[G(s,\tau,c)]\int_{0}^{s}w(s-\xi,\xi)c(s-\xi,\tau)c(\xi,\tau)d\xi,
\end{align*}
where 
\begin{align}\label{D}
	G(s,\tau,c)=\int_{0}^{\tau}\int_{0}^{\infty}w(s,\xi)c(\xi,\rho)d\xi d\rho.
\end{align}
Thus, the above equation can be rewritten as $$ c=\widetilde{\mathcal{A}}c,$$ 
where
\begin{align}\label{19}
	\widetilde{\mathcal{A}}c=c_{0}(s)\exp[-G(s,\tau,c)]+\frac{1}{2}\int_{0}^{\tau}\exp[G(s,\rho,c)-G(s,\tau,c)]\int_{0}^{s}w(s-\xi,\xi)c(s-\xi,\rho)c(\xi,\rho)d\xi d\rho.
\end{align}
Since $\mathcal{A}$ and $\widetilde{\mathcal{A}}$ are equivalent, it is sufficient to show that $\widetilde{\mathcal{A}}$ is contractive.
	\begin{thm}\label{thm1}
	The operator $\widetilde{\mathcal{A}}$  is contractive on $\mathbb{X}_{1}$, i.e; $\|\widetilde{\mathcal{A}}c-\widetilde{\mathcal{A}}c^{\ast}\| \leq \gamma \|c-c^{\ast}\|$, $\forall$ $c,c^{\ast}\in\mathbb{X}_{1}$ with 
	\begin{itemize}
		\item[1] $w(s,\xi)=1$, for all $s,\xi\in(0,\infty)$,
		\item[2] $ \gamma= \widetilde{T} e^{\widetilde{T} D}( \|c_{0}\|+\frac{1}{2}\widetilde{T}D^2+\widetilde{T}D)< 1$ where $\widetilde{T}= min{\{ \tau_{0}, \tau_{1}\}}. $
	\end{itemize}
\end{thm}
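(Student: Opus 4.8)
The plan is to use the stated equivalence of $\mathcal{A}$ and $\widetilde{\mathcal{A}}$ and to establish the contraction estimate directly for $\widetilde{\mathcal{A}}$ as written in \eqref{19}. The decisive simplification is that for the constant kernel $w\equiv 1$ the function $G$ of \eqref{D} collapses to $G(\tau,c)=\int_{0}^{\tau}\int_{0}^{\infty}c(\xi,\rho)\,d\xi\,d\rho$, which no longer depends on $s$; this lets the factor $e^{-G}$ be pulled outside the $s$-integral. First I would form $\widetilde{\mathcal{A}}c-\widetilde{\mathcal{A}}c^{\ast}$ and split it into the initial-data term $c_{0}(s)\big(e^{-G(\tau,c)}-e^{-G(\tau,c^{\ast})}\big)$ and the quadratic double-integral term. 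I would then decompose the latter by adding and subtracting a mixed product, writing its integrand as $E(c)\big(F_{s}(c)-F_{s}(c^{\ast})\big)+\big(E(c)-E(c^{\ast})\big)F_{s}(c^{\ast})$, where I abbreviate $E(c)=\exp[G(\rho,c)-G(\tau,c)]$ and $F_{s}(c)=\int_{0}^{s}c(s-\xi,\rho)c(\xi,\rho)\,d\xi$. These three pieces are designed to become the three summands inside $\gamma$.

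Next I would record the \emph{a priori} bounds that drive every estimate. From $\|c\|,\|c^{\ast}\|<D$ and the norm \eqref{NORM} one gets $|G(\tau,c)|\le \widetilde{T}D$ and $|G(\rho,c)-G(\tau,c)|\le\widetilde{T}D$, so every exponential is bounded by $e^{\widetilde{T}D}$. The elementary inequality $|e^{a}-e^{b}|\le e^{\max(|a|,|b|)}|a-b|$, combined with the Lipschitz bound $|G(\tau,c)-G(\tau,c^{\ast})|\le\int_{0}^{\tau}\int_{0}^{\infty}|c-c^{\ast}|\,d\xi\,d\rho\le\widetilde{T}\|c-c^{\ast}\|$, controls the exponential differences. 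The essential nonlinear estimate is the convolution bound: after the substitution $u=s-\xi$ and Tonelli's theorem,
\begin{align*}
\int_{0}^{\infty}\int_{0}^{s}|c(s-\xi,\rho)|\,|d(\xi,\rho)|\,d\xi\,ds=\Big(\int_{0}^{\infty}|c(u,\rho)|\,du\Big)\Big(\int_{0}^{\infty}|d(\xi,\rho)|\,d\xi\Big)\le D\,\|d\|,
\end{align*}
which factorizes the $L^{1}$-norm of the aggregation integral. Applying this with $d=c-c^{\ast}$, after the telescoping $ab-a^{\ast}b^{\ast}=a(b-b^{\ast})+(a-a^{\ast})b^{\ast}$, bounds $\int_{0}^{\infty}|F_{s}(c)-F_{s}(c^{\ast})|\,ds$ by a multiple of $D\|c-c^{\ast}\|$, while the same inequality with $d=c^{\ast}$ gives $\int_{0}^{\infty}|F_{s}(c^{\ast})|\,ds\le D^{2}$.

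Finally I would assemble the three pieces, in each case integrating in $s$ and taking the supremum in $\tau$. The initial-data term yields the contribution carrying $\|c_{0}\|$; the $E(c)\big(F_{s}(c)-F_{s}(c^{\ast})\big)$ piece, using $|E(c)|\le e^{\widetilde{T}D}$ and the convolution bound, yields the contribution carrying $D$; and the $\big(E(c)-E(c^{\ast})\big)F_{s}(c^{\ast})$ piece, using the exponential Lipschitz estimate together with $\int_{0}^{\infty}|F_{s}(c^{\ast})|\,ds\le D^{2}$, yields the contribution carrying $\tfrac12 D^{2}$. Collecting these and the common prefactor $\widetilde{T}e^{\widetilde{T}D}$ produces $\|\widetilde{\mathcal{A}}c-\widetilde{\mathcal{A}}c^{\ast}\|\le\gamma\|c-c^{\ast}\|$ with $\gamma=\widetilde{T}e^{\widetilde{T}D}\big(\|c_{0}\|+\tfrac12\widetilde{T}D^{2}+\widetilde{T}D\big)$, and since every summand carries a positive power of $\widetilde{T}$, the choice $\widetilde{T}=\min\{\tau_{0},\tau_{1}\}$ with $\tau_{1}$ taken small forces $\gamma<1$. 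I expect the main obstacle to be the nonlinear double-integral term: one must execute the change of variables so that the quadratic convolution's $L^{1}$-norm factorizes as a product of two norms, and must track carefully which powers of $\widetilde{T}$ each piece contributes so that the collected constant matches the stated $\gamma$ exactly.
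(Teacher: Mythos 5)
Your proposal is correct and follows essentially the same route as the paper: the same three-term add-and-subtract decomposition of $\widetilde{\mathcal{A}}c-\widetilde{\mathcal{A}}c^{\ast}$ into an initial-data piece and two quadratic pieces, the same Lipschitz estimate $|\mathcal{U}|\leq \tau e^{\tau J}\|c-c^{\ast}\|$ on the exponential differences, and the same change-of-order/convolution factorization giving the $L^{1}$ bound $D\|c-c^{\ast}\|$ for the aggregation integral. The only part of the paper's argument you omit is the preliminary invariance estimate $\|\widetilde{\mathcal{A}}c\|<D$ (which pins down $\tau_{0}$ and the admissible range of $D$), but that step is not needed for the contraction inequality itself.
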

	\begin{proof}
	First, we start this by showing $ \|\widetilde{\mathcal{A}}c\| < D$ for small $\widetilde{T} > 0$.
 Let us consider
	\begin{align*}
	\|\widetilde{\mathcal{A}}[c]\|\leq &	\|c_{0}\exp[-G(s,\tau,c)]	\|+\frac{1}{2}	\|\int_{0}^{\tau}\exp[G(s,\rho,c)-G(s,\tau,c)]\int_{0}^{s}c(s-\xi,\rho)c(\xi,\rho)d\xi d\rho	\| \\
		\leq &	\|c_{0}\|+\frac{1}{2}	\|\int_{0}^{\tau}\exp\bigg[-\int_{\rho}^{\tau}\int_{0}^{\infty}c(\xi,\tau) d\xi d\rho\bigg]\int_{0}^{s}c(s-\xi,\rho)c(\xi,\rho)d\xi d\rho \| \\
			\leq &	\|c_{0}\|+\frac{1}{2}	\int_{0}^{\infty}\int_{0}^{\tau}\int_{0}^{s}c(s-\xi,\rho) c(\xi,\rho) d\xi d\rho ds .	
	\end{align*}
Further, by using equation \eqref{NORM} and changing the order of integration, we get
		\begin{align*}
		\|\widetilde{\mathcal{A}}[c]\|\leq & \|c_{0}\|+\frac{1}{2}\int_{0}^{\tau}	\int_{0}^{\infty}\int_{0}^{\infty}c(z,\rho) c(\xi,\rho) dz d\xi d\rho \\
		  & \leq\|c_{0}\|+\frac{1}{2} D^2 \tau.	
	\end{align*}	
Now, 	$	\|\widetilde{\mathcal{A}}[c]\|< D $	 holds true if $\|c_{0}\|+\frac{1}{2} D^2 \tau_{0} \leq D $ for a suitable $\tau=\tau_{0}$. This inequality holds if $\tau_{0}  \leq \frac{1}{2\|c_{0}\|}$
and 
\begin{align*}
	 \frac{1-\sqrt{1-2\tau_{0}\|c_{0}\|}}{\tau_{0}}  \leq D \leq \frac{1+\sqrt{1-2\tau_{0}\|c_{0}\|}}{\tau_{0}}. 
	 	\end{align*}
	Now, we will emphasize showing that the mapping $\widetilde{A}$ is contractive. Consider the following:
	\begin{align*}
		\widetilde{\mathcal{A}}[c]-\widetilde{\mathcal{A}}[c^{\ast}]= & c_{0} \mathcal{U}(s,0,\tau)+\frac{1}{2}\int_{0}^{\tau}\mathcal{U}(s,\rho,\tau)\int_{0}^{s}c(s-\xi,\rho)c(\xi,\rho)d\xi d\rho+\frac{1}{2}\int_{0}^{\tau}\exp[G(s,\rho,c)-G(s,\tau,c^{\ast})]\\& \bigg[\int_{0}^{s} c^{\ast}(s-\xi,\rho)[c(\xi,\rho)-c^{\ast}(\xi,\rho)] d\xi+\int_{0}^{s} c(\xi,\rho)[c(s-\xi,\rho)-c^{\ast}(s-\xi,\rho)] d\xi \bigg] d\rho,
	\end{align*}
where $$\mathcal{U}(s,\rho,\tau) =\exp[G(s,\rho,c)-G(s,\tau,c)]-\exp[G(s,\rho,c^{\ast})-G(s,\tau,c^{\ast})]. $$ It can be easily demonstrated that
	\begin{align*}
	|\mathcal{U}(s,\rho,\tau)|\leq \delta \|c-c^{\ast}\|,
\end{align*}
where, $\delta = \tau\exp[\tau J] $ and $J = max{\{\|c\|,\|c^{\ast}\|\}}.$
Therefore, we have 
\begin{align*}
	\widetilde{\mathcal{A}}[c]-	\widetilde{\mathcal{A}}[c]^{\ast}\leq& \delta\|c_{0}\|\|c-c^{\ast}\|+\frac{1}{2}\delta\|c-c^{\ast}\|\int_{0}^{\tau}\|c\|^2 d\rho+\frac{1}{2} \int_{0}^{\tau}\delta[(\|c\|+\|c^{\ast}\|)\|c-c^{\ast}\|]d\rho \\ 
	\leq& \delta\bigg[ \|c_{0}\|+\frac{1}{2}\tau\|c\|^2+\frac{1}{2}\tau(\|c\|+\|c^{\ast}\|)\bigg]\|c-c^{\ast}\|
\end{align*}
if $ \gamma= \delta( \|c_{0}\|+\frac{1}{2}\tau_{1}D^2+\tau_{1}D)< 1$ for suitable $\tau_{1}$. Hence the non-linear operator	$\widetilde{\mathcal{A}}$ is contractive.
\end{proof}
\begin{thm}\label{thm2}
Let	$\psi_k=\sum_{i=0}^{k}\mu_i$ be $k$ term truncated series solution to the coagulation equation \eqref{agg} for $\mu_1,\mu_2,\ldots,\mu_k$ being the components of the iterative solution. In addition, if the parameter $h$ is chosen in such a way that there exists a constant $\Theta_{h} \in (0,1)$ satisfying
	\begin{align*}
		\Theta_{h} := |1+h|+\gamma |h|,
	\end{align*}
	and $\|\mu_{1}\| < \infty$, then the approximated series solution converges to the exact one with the error bound
	$$\|c(s,\tau)-\psi_k(s,\tau)\|\leq \dfrac{\Theta_{h}^m}{1-\Theta_{h}}\|\mu_1\|,$$
	assuming that all the conditions of Theorem \ref{thm1} hold. 

\end{thm}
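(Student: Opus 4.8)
The plan is to prove geometric decay of the iterates and then sum the resulting tail. First I would recast the iteration \eqref{iteragg} into a single recursion relating $\mu_{i+1}$ to $\mu_i$. Since every component of positive index vanishes at $\tau=0$, the term $\int_0^\tau\partial_\rho\mu_i\,d\rho$ collapses to $\mu_i$, giving
$$\mu_{i+1}=(1+h)\,\mu_i+h\int_0^\tau H_i\,d\rho,$$
so the whole argument reduces to controlling the integrated accelerated He's polynomial $\int_0^\tau H_i\,d\rho$ in terms of $\|\mu_i\|$.

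The key step is the bound $\|\mu_{i+1}\|\le\Theta_h\|\mu_i\|$. To obtain it I would first exploit the telescoping structure of \eqref{11}: summing the defining relation shows $\sum_{j=0}^{i-1}H_j=M(\psi_{i-1})$, whence $H_i=M(\psi_i)-M(\psi_{i-1})$. Thus $\int_0^\tau H_i\,d\rho$ is exactly the increment of the integrated nonlinear operator between two consecutive partial sums, which for the constant kernel ($w\equiv1$) is bilinear in its arguments. Applying the Lipschitz estimate that underlies the contraction bound of Theorem \ref{thm1} — together with the uniform norm control $\|\psi_j\|<D$ already secured there — yields $\big\|\int_0^\tau H_i\,d\rho\big\|\le\gamma\|\mu_i\|$. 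Feeding this into the recursion and using the triangle inequality gives $\|\mu_{i+1}\|\le(|1+h|+\gamma|h|)\|\mu_i\|=\Theta_h\|\mu_i\|$.

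With the one-step contraction in hand, a straightforward induction gives $\|\mu_i\|\le\Theta_h^{\,i-1}\|\mu_1\|$ for all $i\ge1$; since $\Theta_h\in(0,1)$ and $\|\mu_1\|<\infty$, the series $\sum_i\mu_i$ is absolutely summable and $\psi_k$ is Cauchy in the complete space $\mathbb{X}_1$. Passing to the limit in \eqref{iteragg} and using continuity of the nonlinear map shows the limit satisfies the operator equation $c=\widetilde{\mathcal{A}}c$ of \eqref{19}; by the uniqueness guaranteed through the contraction of Theorem \ref{thm1} this limit is the exact solution. The error estimate then follows from the tail,
$$\|c-\psi_k\|\le\sum_{i=k+1}^{\infty}\|\mu_i\|\le\|\mu_1\|\sum_{i=k+1}^{\infty}\Theta_h^{\,i-1}=\frac{\Theta_h^{\,k}}{1-\Theta_h}\|\mu_1\|,$$
which is the claimed bound (the exponent $m$ in the statement being the truncation order $k$).

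The main obstacle I anticipate is the central sentence of the second paragraph: justifying that the very same constant $\gamma$ from Theorem \ref{thm1} controls $\int_0^\tau H_i\,d\rho$. Theorem \ref{thm1} established contraction for the \emph{transformed} operator $\widetilde{\mathcal{A}}$ of \eqref{19}, which carries the exponential weights $\exp[G(s,\rho,c)-G(s,\tau,c)]$, whereas the He's polynomial increment is governed by the \emph{untransformed} quadratic term $A$. Reconciling the two — that is, verifying that the Lipschitz constant for the plain bilinear operator is dominated by $\gamma$, or else isolating an equivalent constant playing the same role in $\Theta_h$ — is the delicate part, and it is precisely where the uniform bound $\|\cdot\|<D$ and the smallness of $\widetilde{T}$ must be invoked with care.
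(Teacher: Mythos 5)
Your proposal is correct and follows essentially the same route as the paper: your one-step estimate $\|\mu_{i+1}\|\le\Theta_h\|\mu_i\|$ is exactly the paper's bound $\|\psi_{m+1}-\psi_m\|\le\Theta_h\|\psi_m-\psi_{m-1}\|$ (your telescoping identity $H_i=M(\psi_i)-M(\psi_{i-1})$ just makes explicit why the recursion reduces to $\mu_{i+1}=(1+h)\mu_i-h\bigl(\mathcal{A}[\psi_i]-\mathcal{A}[\psi_{i-1}]\bigr)$), followed by the same geometric-series tail bound and identification of the limit with the exact solution. The obstacle you flag at the end --- that $\gamma$ was established as a Lipschitz constant for the transformed operator $\widetilde{\mathcal{A}}$ of \eqref{19} while the iteration involves the untransformed $\mathcal{A}$ --- is real, but the paper's own proof makes the same unjustified substitution $\|\mathcal{A}[\psi_k]-\mathcal{A}[\psi_m]\|\le\gamma\|\psi_k-\psi_m\|$, so you have if anything been more careful than the source.
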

\begin{proof}
	Using the coefficients \eqref{iteragg}, the $k+1$-term approximated solution is defined as
	\begin{align*}
		\psi_{k+1}= \mu_{0}+\int_{0}^{\tau}\bigg((1+h)\frac{\pd \psi_k}{\pd \tau} + hA(\psi_{k})\bigg)d\tau.
	\end{align*}
	Now, using the conditions of Theorem \ref{thm1}, we have
	\begin{align*}
		\|\psi_{k+1}-\psi_{m+1}\|\leq& \|(\psi_k-\psi_m)(1+h)\|+ \||h|\int_{0}^{\tau}(c_{0}(s)+A[\psi_k]-A[\psi_m]-c_{0}(s))d\tau\|\\
		\leq& \|(\psi_k-\psi_m)(1+h)\|+ |h|\|\mathcal{A}[\psi_k]-\mathcal{A}[\psi_m]\|\\
		\leq & |1+h|\|\psi_k-\psi_m\|+ \gamma |h| \|\psi_k-\psi_m\|\\
		\leq & \Theta_{h}\|\psi_k-\psi_m\|,
	\end{align*}
	where $\Theta_{h} := |1+h|+\gamma |h|.$ Hence, the following result is observed
	\begin{align*}
		\|\psi_{m+1}-\psi_m\| \leq& \Theta_{h} \|\psi_m-\psi_{m-1}\|\leq \Theta_{h}^2\|\psi_{m-1}-\psi_{m-2}\|\leq \hdots \leq  \Theta_{h}^m\|\psi_1-\psi_0\|.
	\end{align*}
	Using the triangle inequality and the properties of norm for all $k,m\in \mathbb{N}$ with $k>m$, it is certain that
	\begin{align*}
		\|\psi_k-\psi_m\|\leq & \|\psi_{m+1}-\psi_m\|+\|\psi_{m+2}-\psi_{m+1}\|+\cdots+\|\psi_{k}-\psi_{k-1}\|\\
		\leq & (\Theta_{h}^m+ \Theta_{h}^{m+1}+\cdots+ \Theta_{h}^{k-1})\|\psi_1-\psi_0\|\\
		\leq & \Theta_{h}^m\bigg(\dfrac{1-\Theta_{h}^{k-m}}{1-\Theta_{h}}\bigg)\|\mu_1\|,
	\end{align*}
which converges to zero as $m \rightarrow \infty$ as $0< \Theta_{h}< 1$.
 This implies that there exists a $\psi$ such that $\lim\limits_{k\rightarrow \infty }\psi_k=\psi.$ Thus, we have $\psi= \sum_{k=0}^{\infty} \mu_k=c(s,\tau)$, which is the exact solution of the coagulation equation. Further, by fixing $m$ and letting $k \rightarrow \infty$, the desired theoretical error bound is obtained as
	$$\|c(s,\tau)-\psi_k(s,\tau)\|\leq \dfrac{\Theta_{h}^m}{1-\Theta_{h}}\|\mu_1\|.$$ 
\end{proof}
\begin{rem}
	Consider the convergence control parameter $h$ with $\Theta_{h}<1$, so that
	\begin{align*}
		\Theta_{h}=|1+h|+\gamma |h|<1 \implies \gamma < \dfrac{1-|1+h|}{|h|}, \quad h\neq 0.
	\end{align*}
	From the RHS of the above equation, we get
	\begin{align*}
		\dfrac{1-|1+h|}{|h|}=
		\begin{cases}
			-1-\frac{2}{h}, & h<-1,\\
			1 ,& -1\leq h<0,\\
			-1,& h>0.
		\end{cases}
	\end{align*}
	Therefore, one can choose the parameter $h \in [-1,0).$  
\end{rem}
\subsection{AHAM implementation for coupled aggregation-breakage equation (CABE) } 
This section delves the mathematical formulation of AHAM for solving CABE \eqref{agg-brk} and its convergence analysis.
On comparing \eqref{agg-brk} and \eqref{7}, we have $g(x,\tau)=0$, and
\begin{align*}
L[c(s,\tau)]=-\int_{s}^{\infty}\beta(s,\xi)\mathcal{S}(\xi)c(\xi,\tau)d\xi+\mathcal{S}(s)c(s,\tau),
\end{align*} 
and
\begin{align*}
	 M[c(s,\tau)]=-\frac{1}{2}\int_{0}^{s}w(s-\xi,\xi)c(s-\xi,\tau)c(\xi,\tau)d\xi+\int_{0}^{\infty}w(s,\xi)c(s,\tau)c(\xi,\tau)d\xi.
\end{align*}
  
Following the procedure defined in Section 3.2, iterations \eqref{iter} are as follows
\begin{align}\label{iter3}
	 \mu_{0}=&c_{0}(s), \nonumber \\
	\mu_{1}=&\int_{0}^{\tau}h\bigg[\frac{\partial \mu_{0}}{\partial\rho}+L\mu_{0}(s,\rho) +H_{0}\bigg]{d\rho}, \nonumber \\
	\mu_{2}=&\int_{0}^{\tau}\bigg(\frac{\partial \mu_{1}}{\partial\rho}+h\bigg[\frac{\partial \mu_{1}}{\partial\rho}+L\mu_{1}(s,\rho) +H_{1}\bigg]\bigg) {d\rho},\nonumber \\ 
	&\vdots \nonumber \\
	\mu_{k}=&\int_{0}^{\tau}\bigg(\frac{\partial \mu_{k-1}}{\partial\rho}+h\bigg[\frac{\partial \mu_{k-1}}{\partial\rho}+L\mu_{k-1}(s,\rho)+ H_{k-1}\bigg]\bigg)  {d\rho}.
\end{align}
These series solution components include the convergence control parameter, which is determined using the discrete square residual error approach mentioned in Remark (2.1). Then the solution to the problem \eqref{agg-brk} is presented as
\begin{align*}
	c(s,\tau)=\lim_{r \to 1} \Psi(s,\tau,r) = \sum_{k=0}^{\infty}\mu_{k}(s,\tau),
\end{align*}
and for approximation, the $k$-th order truncated series solution is provided  as $\psi_{k}=\sum_{i=0}^{k}\mu_{i}(s,\tau).$
\subsection{Convergence analysis for CABE } 
To study the convergence analysis of series solution obtained via AHAM,
 let us consider the space $\mathbb{X}_{2}= (\mathcal{C}(\mathcal{L}^{1}[0 , \infty):[0 ,T]), \|.\|)$ to be a Banach space with norm 
\begin{align*}
	\|c\|	= \sup_{\tau\in [0,\tau_{0}]} \int_{0}^{\infty} \exp(\sigma s)\left|c(s,\tau)  \right|ds < \infty, 
\end{align*}
where $\sigma >0. $
Now, take the equation \eqref{agg-brk} in the operator form as
\begin{align}\label{20}
	c(s,\tau) = \mathcal{V}[c],
\end{align}
where $\mathcal{V}$ is a non-linear operator defined over $\mathbb{X}_{2}$ and is given by
\begin{align}\label{21}
\mathcal{V}[c]=& c_{0}(s)+ \int_{0}^{\tau}V(c)d\rho.
\end{align}
where $$ V(c)= \frac{1}{2}\int_{0}^{s}w(s-\xi,\xi)c(s-\xi,\rho)c(\xi,\rho)d\xi-\int_{0}^{\infty}w(s,\xi)c(s,\rho)c(\xi,\rho)d\xi+\int_{s}^{\infty}\beta(s,\xi)\mathcal{S}(\xi)c(\xi,\rho)d\xi -\mathcal{S}(s)c(s,\rho) $$
To show the contraction behavior of the operator $\mathcal{V}$, an equivalent form of the equation \eqref{21} can be written as
\begin{align*}
	\frac{\partial }{\partial\tau}\bigg(c(s,\tau)\exp[R(s,\tau,c)]\bigg)=\exp[R(s,\tau,c)]\bigg(\frac{1}{2}\int_{0}^{s}w(s-\xi,\xi)c(s-\xi,\tau)c(\xi,\tau)d\xi+\int_{s}^{\infty}\beta(s,\xi)\mathcal{S}(\xi)c(\xi,\tau)d\xi\bigg),
\end{align*}
where 
\begin{align}\label{D}
	R(s,\tau,c)=S(s)\tau + \int_{0}^{\tau}\int_{0}^{\infty}w(s,\xi)c(\xi,\rho)d\xi d\rho.
\end{align}
Thus, we have $$ c=\widetilde{\mathcal{V}} c,$$ 
where
\begin{align}\label{22}
	\widetilde{\mathcal{V}}c=&c_{0}\exp[-R(s,\tau,c)]+\int_{0}^{\tau}\exp[R(s,\rho,c)-R(s,\tau,c)]\bigg(\int_{0}^{s}\frac{1}{2}w(s-\xi,\xi)c(s-\xi,\rho)c(\xi,\rho)d\xi \nonumber \\& +\int_{s}^{\infty}\beta(s,\xi)\mathcal{S}(\xi)c(\xi,\rho)d\xi\bigg) d\rho.
\end{align}
	\begin{thm}\label{thm3}
	The operator $\widetilde{\mathcal{V}}$  is contractive on $\mathbb{X}_{2}$, i.e, $\|\widetilde{\mathcal{V}}c-\widetilde{\mathcal{V}}c^{\ast}\| \leq \nabla \|c-c^{\ast}\|$, $\forall$ $c,c^{\ast}\in\mathbb{X}_{2}$ with 
	\begin{itemize}
		\item[1] $\beta(s,\xi)=\frac{\eta s^{i-1}}{\xi^i}$, $i=1,2,\ldots$ and $\eta>0$ is a constant satisfying $\int_{0}^{\xi}s \beta(s,\xi)ds = \xi$,
			\item[2] $S(s)\leq s^{j}, j= 1,2,\ldots,$
				\item[3] $ \sigma $ is taken such that $[\exp(\sigma \xi)-1]<1,$
					\item[4] $w(s,\xi)=1$ for all $s,\xi \in(0,\infty)$ and, 
		\item[5] $ \gamma_{2}= \widetilde{T} e^{2\widetilde{T} D}( \|c_{0}\|+ 2D(\tau_{0}D+1)+(2\tau_{0}+1)\frac{\eta(j-1)!}{\sigma^j})< 1$ where $D=\|c_{0}\|(\tau+1)  $ for suitable $\tau_{0}$
	\end{itemize}
\end{thm}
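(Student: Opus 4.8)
The plan is to adapt the contraction argument of Theorem \ref{thm1} to the operator $\widetilde{\mathcal{V}}$ of \eqref{22}, proceeding in two stages: first establishing that $\widetilde{\mathcal{V}}$ maps a ball of radius $D$ in $\mathbb{X}_{2}$ into itself for a suitably small time horizon $\widetilde{T}$, and then proving the Lipschitz estimate with contraction constant $\nabla=\gamma_{2}$. Throughout, the exponentially weighted norm $\|c\|=\sup_{\tau}\int_{0}^{\infty}e^{\sigma s}|c(s,\tau)|\,ds$ is the decisive tool, chosen precisely so that the growth generated by the breakage gain term $\int_{s}^{\infty}\beta(s,\xi)\mathcal{S}(\xi)c(\xi,\rho)\,d\xi$ can be absorbed. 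As in Theorem \ref{thm1}, I would first record that both exponential prefactors are harmless, namely $e^{-R(s,\tau,c)}\le 1$ and $e^{R(s,\rho,c)-R(s,\tau,c)}\le 1$ for $0\le\rho\le\tau$, which holds because $\mathcal{S}(s)\ge 0$, $w\ge 0$ and $c\ge 0$ make the relevant exponent nonpositive.

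For the self-map bound I would estimate $\|\widetilde{\mathcal{V}}c\|$ by the three pieces coming from \eqref{22}. The initial datum contributes $\|c_{0}\|$. For the aggregation gain term the weight factorizes as $e^{\sigma s}=e^{\sigma(s-\xi)}e^{\sigma\xi}$ on the simplex $0<\xi<s$, so after the substitution $z=s-\xi$ and Fubini the double integral decouples into $\frac{1}{2}\bigl(\int_{0}^{\infty}e^{\sigma z}|c(z,\rho)|\,dz\bigr)^{2}\le\frac{1}{2}\|c\|^{2}$, giving a contribution $\le\frac{1}{2}D^{2}\tau$. For the breakage term I would insert $\beta(s,\xi)=\eta s^{i-1}/\xi^{i}$ and $\mathcal{S}(\xi)\le\xi^{j}$, change the order of integration over $\{0<s<\xi\}$, and use the elementary estimate of $\int_{0}^{\xi}e^{\sigma s}s^{i-1}\,ds$ together with condition (3), $e^{\sigma\xi}-1<1$, to reduce the tail integral to the Gamma-type integral $\int_{0}^{\infty}\xi^{j-1}e^{-\sigma\xi}\,d\xi=\tfrac{(j-1)!}{\sigma^{j}}$, which produces the factor $\tfrac{\eta(j-1)!}{\sigma^{j}}$ appearing in condition (5). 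Collecting the three pieces yields $\|\widetilde{\mathcal{V}}c\|\le\|c_{0}\|+\frac{1}{2}D^{2}\tau+\tfrac{\eta(j-1)!}{\sigma^{j}}\|c\|\tau$, and one then fixes $\tau_{0}$ (hence $D=\|c_{0}\|(\tau+1)$) so that the right-hand side stays below $D$.

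For the contraction estimate I would mimic the decomposition used for $\widetilde{\mathcal{A}}$, writing $\widetilde{\mathcal{V}}c-\widetilde{\mathcal{V}}c^{\ast}$ as a sum of three groups of terms. The first group carries the difference of exponential prefactors $\mathcal{W}(s,\rho,\tau)=e^{R(s,\rho,c)-R(s,\tau,c)}-e^{R(s,\rho,c^{\ast})-R(s,\tau,c^{\ast})}$; since the $\mathcal{S}(s)\tau$ contribution to $R$ is independent of $c$, it factors out and cancels in the difference, so only the aggregation part of $R$ survives and $|\mathcal{W}|\le\delta\|c-c^{\ast}\|$ exactly as for $\mathcal{U}$ in Theorem \ref{thm1}, with $\delta\sim\widetilde{T}e^{2\widetilde{T}D}$. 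The aggregation quadratic difference is linearized by the telescoping identity $c(s-\xi)c(\xi)-c^{\ast}(s-\xi)c^{\ast}(\xi)=c^{\ast}(s-\xi)[c(\xi)-c^{\ast}(\xi)]+c(\xi)[c(s-\xi)-c^{\ast}(s-\xi)]$ and estimated by the same factorization of the weight, while the breakage term is linear in $c$, so its difference is controlled by the very bound derived in the self-map step applied to $c-c^{\ast}$. Summing the three contributions and using $\|c\|,\|c^{\ast}\|\le D$ gives $\|\widetilde{\mathcal{V}}c-\widetilde{\mathcal{V}}c^{\ast}\|\le\gamma_{2}\|c-c^{\ast}\|$ with $\gamma_{2}=\widetilde{T}e^{2\widetilde{T}D}\bigl(\|c_{0}\|+2D(\tau_{0}D+1)+(2\tau_{0}+1)\tfrac{\eta(j-1)!}{\sigma^{j}}\bigr)$, which is $<1$ by hypothesis.

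The main obstacle is the breakage gain term: unlike in the pure-aggregation case it integrates $c$ over the tail $(s,\infty)$ against the singular kernel $\beta(s,\xi)=\eta s^{i-1}/\xi^{i}$ weighted by the growing selection rate $\mathcal{S}(\xi)\le\xi^{j}$, so a plain $L^{1}$ norm cannot close the estimate. Making the exponential weight do the required work — choosing $\sigma$ via condition (3), exploiting the mass-conservation normalization $\int_{0}^{\xi}s\beta(s,\xi)\,ds=\xi$ (which fixes $\eta$), and carefully executing the change of order of integration so that the $\xi^{j}$ growth is tamed by $e^{-\sigma\xi}$ into the finite constant $\tfrac{\eta(j-1)!}{\sigma^{j}}$ — is the delicate part of the argument; every remaining estimate is a weighted-norm analogue of a step already carried out in Theorem \ref{thm1}.
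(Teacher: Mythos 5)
Your proposal is correct and follows essentially the same route as the paper: the same exponential reformulation $\widetilde{\mathcal{V}}$, the same difference-of-exponentials bound $|\mathcal{U}_1|\le\delta_1\|c-c^{\ast}\|$, the same telescoping of the quadratic aggregation term, an invariant ball for the self-map step, and the weighted norm absorbing the breakage kernel into the factor $\eta(j-1)!/\sigma^{j}$. If anything, you spell out the self-map verification and the Fubini/weight computation for the breakage term more explicitly than the paper, which merely asserts them.
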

\begin{proof}
	Take $c,c^{\ast}$ from the space $\mathbb{X}_{2}$ and we get
\begin{align}\label{2co}
	\widetilde{\mathcal{V}}[c]-	\widetilde{\mathcal{V}}[c]^{\ast}= & c_{0}(s) \mathcal{U}_{1}(s,0,\tau)+\int_{0}^{\tau}\mathcal{U}_{1}(s,\rho,\tau)\bigg[\frac{1}{2}\int_{0}^{\xi}c(s-\xi,\xi)c(\xi,\rho)d\xi+\int_{s}^{\infty}\beta(s,\xi)S(\xi)c(\xi,\rho)d\xi\bigg] d\rho \nonumber \\&-\int_{0}^{\tau}\exp[R(s,\rho,c^{\ast})-R(s,\tau,c^{\ast})] \bigg[\frac{1}{2}\int_{0}^{s} c^{\ast}(s-\xi,\xi)[c^{\ast}(\xi,s)-c(\xi,s)] d\xi  \nonumber \\&+\int_{0}^{s} \frac{1}{2}c(\xi,\rho)[c^{\ast}(s-\xi,\rho)-c(s-\xi,\rho)] d\xi +\int_{s}^{\infty}\beta(s,\xi)S(\xi)(c^{\ast}(\xi,\rho)-c(\xi,\rho))d\xi\bigg] d\rho,
\end{align}

here $$\mathcal{U}_{1}(s,\rho,\tau) =\exp[R(s,\rho,c)-R(s,\tau,c)]-\exp[R(s,\rho,c^{\ast})-R(s,\tau,c^{\ast})]. $$ It can be easily shown that
\begin{align*}
\left|	\mathcal{U}_{1}(s,\rho,\tau)\right|\leq \delta_{1} \|c-c^{\ast}\|,
\end{align*}
where, $\delta_{1} = \tau\exp[\tau J_{1}] $ and $J_{1} = max{\{\|c\|,\|c^{\ast}\|\}}.$
To show the contraction of the operator $\widetilde{V}$, consider a set $Q_{1}=\{c\in \mathbb{X}_{2}: \|c\|\leq 2D\}.$
 It can be seen that $\widetilde{V}$  maps $Q_{1}$ into itself. For $c,c^{\ast}\in Q_{1} $ we have $J_{1} \leq 2M$. By taking norm on both side of equation \eqref{2co}
Therefore, we have 
\begin{align*}
\|	\widetilde{\mathcal{V}}[c]-\widetilde{\mathcal{V}}[c]^{\ast}\|\leq& \delta_{1}\|c_{0}\|\|c-c^{\ast}\|+\delta_{1}\|c-c^{\ast}\|\int_{0}^{\tau}(\frac{1}{2}\|c\|^2 +\frac{\eta(j-1)!}{\sigma^j}\|c\|)d\rho\\&+ \int_{0}^{\tau}\exp\{(\tau)J_{1}\}[\frac{1}{2}(\|c\|+\|c^{\ast}\|)\|c-c^{\ast}\|+ \frac{\eta(j-1)!}{\sigma^j}\|c-c^{\ast}\|]d\rho \\ 
	\leq& \delta_{1}\bigg[ \|c_{0}\|+\tau(\|\frac{1}{2}c\|^2+\frac{d(j-1)!}{\sigma^j}\|c\|)+\frac{1}{2}(\|c\|+\|c^{\ast}\|)+\frac{\eta(j-1)!}{\sigma^j}\bigg]\|c-c^{\ast}\|
\end{align*}
The non-linear operator $\widetilde{\mathcal{V}}$ is contractive if $ \gamma_{2}= \widetilde{T} e^{2\widetilde{T} D}( \|c_{0}\|+ 2D(\tau_{0}D+1)+(2\tau_{0}+1)\frac{\eta(j-1)!}{\sigma^j})< 1$.
\end{proof}
\begin{thm}\label{thm4}
	Let $\widetilde{\mathcal{V}}$ be the operator given by \eqref{22}, satisfying all the conditions outlined in Theorem \ref{thm3}.
	Let $\mu_1,\mu_2,\ldots,\mu_k$ are the components of the series solution and $\psi_k=\sum_{i=0}^{k}\mu_i$ be $k$ term truncated series solution. In addition, if $h$ is chosen in such a way that there exist a constant $\Theta_{2} \in (0,1)$ satisfying
	\begin{align*}
		\Theta_{2} := |1+h|+\lambda |h|,
	\end{align*}
	and $\|\mu_{1}\| < \infty$.  Then the approximated series solution converges to the exact one with the error bound
	$$\|c(s,\tau)-\psi_k(s,\tau)\|\leq \dfrac{\Theta_{2}^m}{1-\Theta_{2}}\|\mu_1\|,$$
under the assumption that all the conditions of Theorem \ref{thm3} are satisfied.	
\end{thm}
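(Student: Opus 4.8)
The plan is to mirror the Banach--contraction argument of Theorem \ref{thm2}, simply replacing the pure--aggregation operator $\mathcal{A}$ by the coupled operator $\mathcal{V}$ of \eqref{21} and the aggregation contraction constant $\gamma$ by the factor $\lambda$, which I identify with the contraction constant $\nabla=\gamma_2$ furnished by Theorem \ref{thm3}. Accordingly the governing geometric ratio becomes $\Theta_2=|1+h|+\lambda|h|$. First I would telescope the high--order deformation relations \eqref{iter3}: summing them collapses the successive partial--derivative terms and produces the $(k+1)$--term approximant
\begin{align*}
\psi_{k+1}= \mu_{0}+\int_{0}^{\tau}\bigg((1+h)\frac{\pd \psi_k}{\pd \tau}+ hV(\psi_k)\bigg)d\tau,
\end{align*}
which is structurally identical to the aggregation case, except that $V$ now carries both the bilinear aggregation part and the linear breakage part appearing in \eqref{21}.

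Next I would estimate $\|\psi_{k+1}-\psi_{m+1}\|$ in the weighted Banach space $\mathbb{X}_{2}$. Splitting the difference into the $(1+h)$--weighted derivative contribution and the $h$--weighted operator contribution, and using $\int_{0}^{\tau}\big(V(\psi_k)-V(\psi_m)\big)\,d\tau=\mathcal{V}[\psi_k]-\mathcal{V}[\psi_m]$ (the $c_0$ terms cancel), I would invoke the contractivity of Theorem \ref{thm3}, namely $\|\mathcal{V}[\psi_k]-\mathcal{V}[\psi_m]\|\le\lambda\|\psi_k-\psi_m\|$. The bound proved there for $\widetilde{\mathcal{V}}$ transfers to $\mathcal{V}$ because the two operators share the same fixed point and the same Lipschitz estimate under $\|\cdot\|$. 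This yields
\begin{align*}
\|\psi_{k+1}-\psi_{m+1}\|\le |1+h|\,\|\psi_k-\psi_m\|+\lambda|h|\,\|\psi_k-\psi_m\|=\Theta_2\|\psi_k-\psi_m\|.
\end{align*}

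Specialising to consecutive indices gives $\|\psi_{m+1}-\psi_m\|\le\Theta_2\|\psi_m-\psi_{m-1}\|$, and iterating down to $\|\psi_1-\psi_0\|=\|\mu_1\|$ produces $\|\psi_{m+1}-\psi_m\|\le\Theta_2^m\|\mu_1\|$. Since $\Theta_2\in(0,1)$ by hypothesis, the triangle inequality over $k>m$ together with the geometric sum
\begin{align*}
\|\psi_k-\psi_m\|\le\Theta_2^m\bigg(\frac{1-\Theta_2^{k-m}}{1-\Theta_2}\bigg)\|\mu_1\|
\end{align*}
shows that $(\psi_k)$ is Cauchy in $\mathbb{X}_{2}$; its limit $\psi=\sum_{k}\mu_k$ is the fixed point of $\mathcal{V}$, hence the exact solution of \eqref{agg-brk}. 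Fixing $m$ and letting $k\to\infty$ then delivers the stated bound $\|c-\psi_k\|\le \Theta_2^{m}/(1-\Theta_2)\,\|\mu_1\|$.

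The only genuinely new point relative to Theorem \ref{thm2}, and thus the step to get right, is the identification of the Lipschitz constant $\lambda$ with the contraction factor of Theorem \ref{thm3}: I must check that the breakage contributions, controlled through the conditions $\int_{0}^{\xi}s\beta(s,\xi)\,ds=\xi$, $S(s)\le s^{j}$, and the weight factor $\exp(\sigma s)$ with $\exp(\sigma\xi)-1<1$, enter the same Lipschitz estimate, so that passing from the weighted form $\widetilde{\mathcal{V}}$ back to the operator $\mathcal{V}$ used in the telescoped iteration does not push the constant out of the regime $\Theta_2<1$. Once that consistency is secured, the contraction iteration is verbatim that of Theorem \ref{thm2}.
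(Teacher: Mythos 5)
Your proposal follows essentially the same route as the paper's own proof: telescoping the deformation relations \eqref{iter3} into $\psi_{k+1}= \mu_{0}+\int_{0}^{\tau}\big((1+h)\frac{\partial \psi_k}{\partial \tau}+hV(\psi_{k})\big)d\tau$, invoking the contractivity from Theorem \ref{thm3} to obtain the ratio $\Theta_2=|1+h|+\lambda|h|$, and then running the standard geometric-series/Cauchy argument to the stated error bound. Your closing remark about needing to transfer the Lipschitz estimate from $\widetilde{\mathcal{V}}$ to $\mathcal{V}$ is a point the paper itself passes over silently, so you are, if anything, slightly more careful than the original.
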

\begin{proof}
	Using \eqref{iter3}, $k+1$-term approximated series solution is defined as
	\begin{align*}
		\psi_{k+1}= \mu_{0}+\int_{0}^{\tau}\bigg((1+h)\frac{\pd \psi_k}{\pd \tau} +hV(\psi_{k})\bigg)d\tau.
	\end{align*}
	
	Now, using the conditions of Theorem \ref{thm3}, we have
	\begin{align*}
		\|\psi_{k+1}-\psi_{m+1}\|\leq& \|(\psi_k-\psi_m)(1+h)\|+ |h|\|\mathcal{V}[\psi_k]-\mathcal{V}[\psi_m]\|\\
		\leq & |1+h|\|\psi_k-\psi_m\|+ \delta |h| \|\psi_k-\psi_m\|\\
		\leq & \lambda\|\psi_k-\psi_m\|,
	\end{align*}
	where $\Theta_{2} := |1+h|+\lambda|h|.$ Hence, the following results is observed
	\begin{align*}
		\|\psi_{m+1}-\psi_m\| \leq& \Theta_{2} \|\psi_m-\psi_{m-1}\|\leq \Theta_{2}^2\|\psi_{m-1}-\psi_{m-2}\| \leq \ldots\leq  \Theta_{2}^m\|\psi_1-\psi_0\|.
	\end{align*}
	Using the properties of norm for all $k,m\in \mathbb{N}$ with $k>m$, it is certain that
	\begin{align*}
		\|\psi_k-\psi_m\|\leq & \|\psi_{m+1}-\psi_m\|+\|\psi_{m+2}-\psi_{m+1}\|+\cdots+\|\psi_{k}-\psi_{k-1}\|\\
		\leq & (\Theta_{2}^m+ \Theta_{2}^{m+1}+\cdots+ \Theta_{2}^{k-1})\|\psi_1-\psi_0\|\\
		\leq &  \Theta_{2}^m\bigg(\dfrac{1-\Theta_{2}^{k-m}}{1-\Theta_{2}}\bigg)\|\mu_1\| 
		\leq \dfrac{\Theta_{2}^m}{1-\Theta_{2}}\|\mu_1\|.
	\end{align*}
One can see that $ 	\|\psi_k-\psi_m\|\to 0 $ as $n\to\infty$ and $\Theta_{2}<1.$
This implies that there exists a $\psi$ such that $\lim\limits_{k\rightarrow \infty }\psi_k=\psi.$ Thus, we have $\psi= \sum_{k=0}^{\infty} \mu_k=c(\tau,s)$, which is the exact solution of the coupled aggregation-breakage equation. Further, by fixing $m$ and letting $k \rightarrow \infty$, error bound is obtained as
	$$\|c(s,\tau)-\psi_k(s,\tau)\|\leq \dfrac{\Theta_{2}^m}{1-\Theta_{2}}\|\mu_1\|.$$ This concludes the proof of the theorem.
\end{proof}
	\section{Numerical Testing} 
In this section, we have examined the method for five physical test cases of non-linear aggregation equation \eqref{agg} and findings of number density and moments are compared to the analytical solutions and other existing methods. Thanks to the noticeable and better outcomes in pure aggregation equation \eqref{agg}, the proposed algorithm is extended to solve the CABE \eqref{agg-brk}. Two test scenarios for the model are being considered to demonstrate the effectiveness of our proposed scheme.
				\begin{example}\label{q1} 
Consider the aggregation equation \eqref{agg} with the aggregation kernel $w(s,\xi)=1$, and an initial condition in the form of inverse exponential function, i.e;  $c_{0}(s)=e^{-s}$. The constant kernel helps analyze early phases of coagulation when particles are small, as in the case of protein aggregation. The precise solution of this problem is presented in \cite{ranjbar2010numerical} as
			\begin{align*}
				c (s, \tau)=4 \frac{e^{\frac{-2s}{\tau+2}}}{(\tau+2)^2}.
			\end{align*}
			Using the equation (\ref{iteragg}), first few iterative solutions are obtained as follows,
			\begin{align*}
				\mu_{0} =& e^{-s},\quad
				\mu_{1} = h \tau e^{-s} \left(1-\frac{ s}{2}\right),\\
				\mu_{2} =&h \tau e^{-s}\bigg[1-\frac{s}{2}+h-\frac{hs}{2}+h\tau(-\frac{3s}{4}+\frac{3}{4}+\frac{s^2}{8})+h^2\tau^2(\frac{1}{6}-\frac{s}{4}+\frac{s^2}{12}-\frac{s^3}{144})\bigg].
\end{align*}
\end{example}
Due to the complexity of the iterations, only a few of them are presented here. For greater precision, more iterations can be performed with the help of MATHEMATICA. For the numerical study, third term iterative solution is considered and the outcomes are compared with the exact solution and the iterative solutions obtained using existing HAM \cite{kaur2022approximate}, HPM \cite{kaur2019analytical}, ADM \cite{singh2015adomian}, ODM \cite{kaushik2023novel} and LODM \cite{kaushik2023laplace}, see Figure \ref{fig2}. Additionally the absolute errors between exact and approximated solutions are also reported. It is essential to emphasize that the HAM, ADM, and HPM all yield same iterations and consequently, the identical finite term series solutions.

		\begin{figure}[htb!]
	\centering
	\subfigure[Number density at $\tau$ = 2 ]{\includegraphics[width=0.40\textwidth]{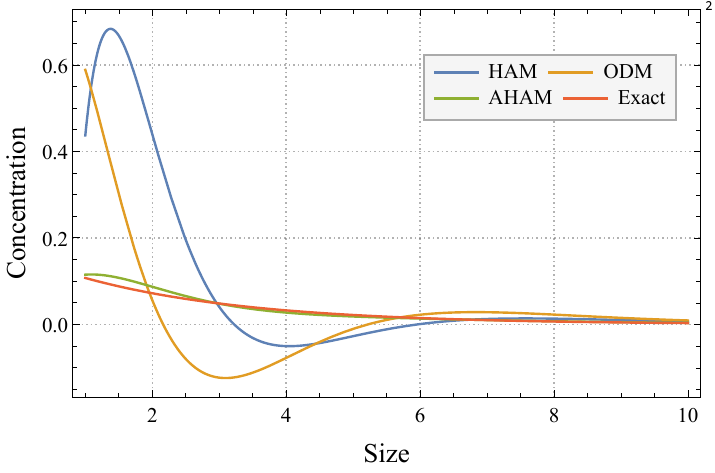}}
	\subfigure[Error at $s$ = 5 ]{\includegraphics[width=0.40\textwidth]{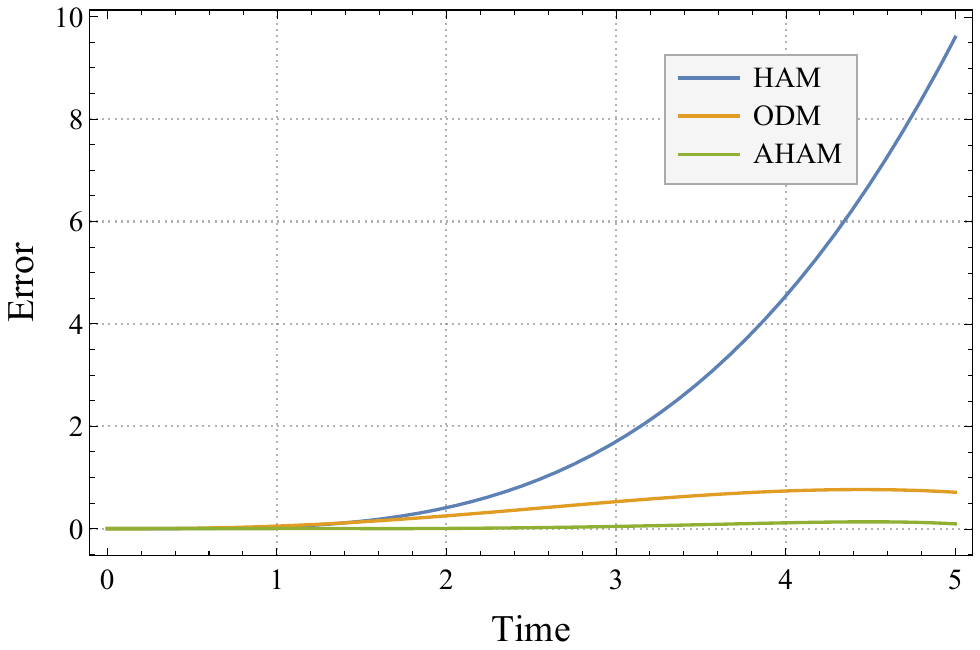}}
\caption{Number density and error plots for Example \ref{q1}}
	\label{fig2}
\end{figure}
 It is observed that HAM and ODM solutions show disparities compared to the exact density function, whereas AHAM solutions maintain accuracy. The findings substantiate the noteworthy concordance between the AHAM and the exact solution. The error results presented in Figure \ref{fig2}(b) demonstrate that while all the schemes exhibit comparatively low error for a short time, over extended periods the errors associated with HAM and ODM are considerably higher than AHAM. Additionally, Table \ref{table1} compares the absolute errors at $s=5$ for the HAM, ODM, and AHAM methods. 
 \begin{table}
	\begin{center}
		\begin{tabular}{ p{0.5cm}| p{2.2cm} p{2.2cm} p{2.2cm} p{2.2cm} p{2.2cm} p{2.2cm} p{2.5cm}  }
			\multirow{2}{*} {$\tau$}  
	
			& Exact & HAM & ODM & AHAM & HAM Error & ODM Error & AHAM Error \\ \hline
			& & & & & & &\\ 
			0.5 & 1.1722E-2  & 1.16686E-2 & 1.17467E-2 & 1.17527E-2 &5.34288 E-5 & 2.47063 E-5 &  3.06568 E-5\\ 
			& & & & & & & \\ 
			1 &1.58551E-2 & 1.502E-2& 1.56451E-2 &1.59766E-2 & 8.35101E-4 & 2.1002E-4 &  1.21521 E-4 \\
			& & & & & & & \\ 
			1.5 &1.87535E-2 & 1.47919E-2  &1.69015E-2  &1.8747 E-2  & 3.96161E-3  & 1.85196 E-3 &  6.55172E-6 \\
			& & & & & & & \\ 
			2.0 & 2.05212 E-2 & 8.98393E-3  & 1.39846E-2  & 1.9909E-2  & 1.15373E-2 &  6.53667E-3 & 6.12241 E-4 \\
			& & & & & & & \\ 
			2.5 & 2.1406E-2 & -4.4042E-3  & 5.36266E-3  & 1.96291E-2  &2.58103E-2 &  1.60434E-2 & 1.77691 E-3 \\
			& & & & & & & \\ 
			3.0 & 2.16536 E-2 & -2.73729E-2  & -1.04957E-2  & 1.81253E-2  & 4.90266E-2 &  3.21494E-2 & 3.52839 E-3 \\
			& & & & & & & \\ 
			\hline
		\end{tabular}
	\end{center}
	\caption{Absolute errors for $s = 5$ at different time levels Example 4.1.}
	\label{table1}
\end{table}
Superiority of AHAM can also be seen from Table \ref{table1} that compares the approximated and exact solutions for all the methods at various time. The results indicate that for small time upto $\tau=1$, all methods exhibit good predication with the exact number density but as time progresses, errors due to HAM and ODM start blowing up. Further, it is noticed that ODM and HAM start deviating from the basic facts that solutions are non-negatives see the results at $\tau=2.5$ and $\tau=3$.  
\begin{figure}[htb!]
	\centering
	\subfigure[AHAM error $(n=3)$]{\includegraphics[width=0.32\textwidth]{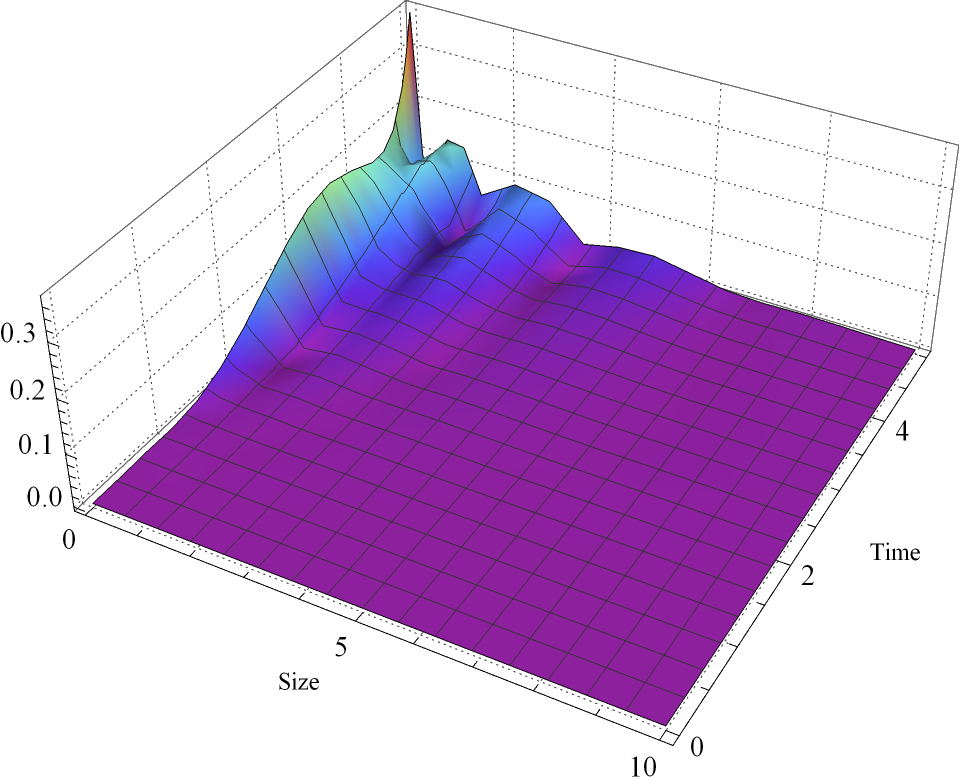}}
	\subfigure[HAM error $(n=3)$ ]{\includegraphics[width=0.32\textwidth]{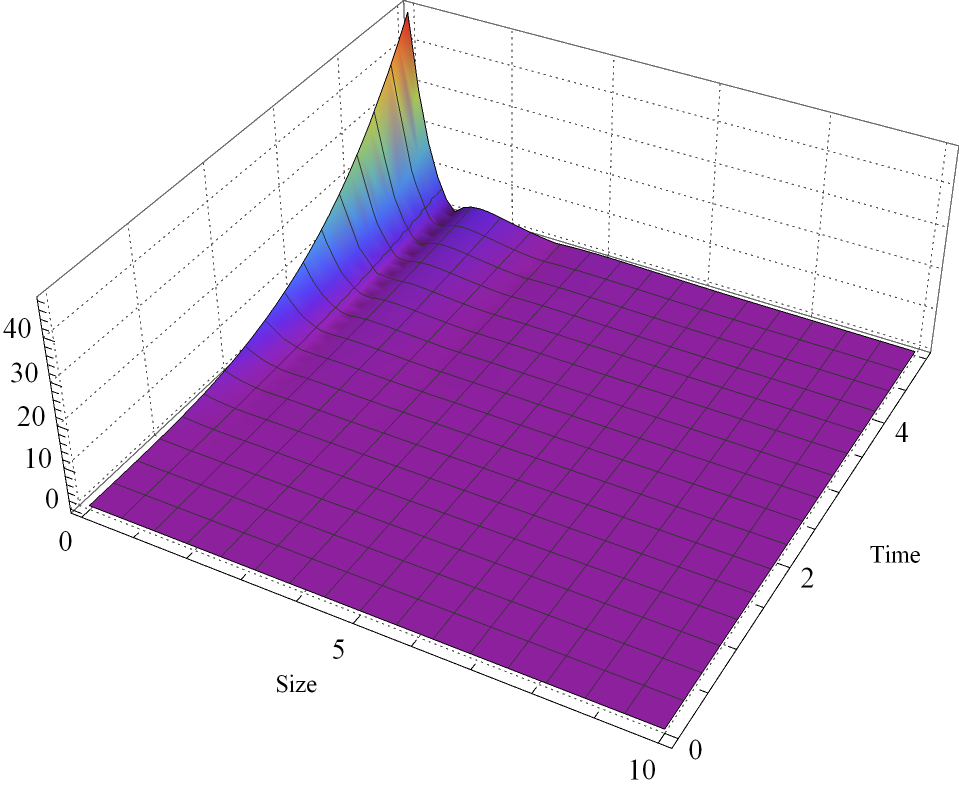}}
	\subfigure[ODM error $(n=3)$]{\includegraphics[width=0.32\textwidth]{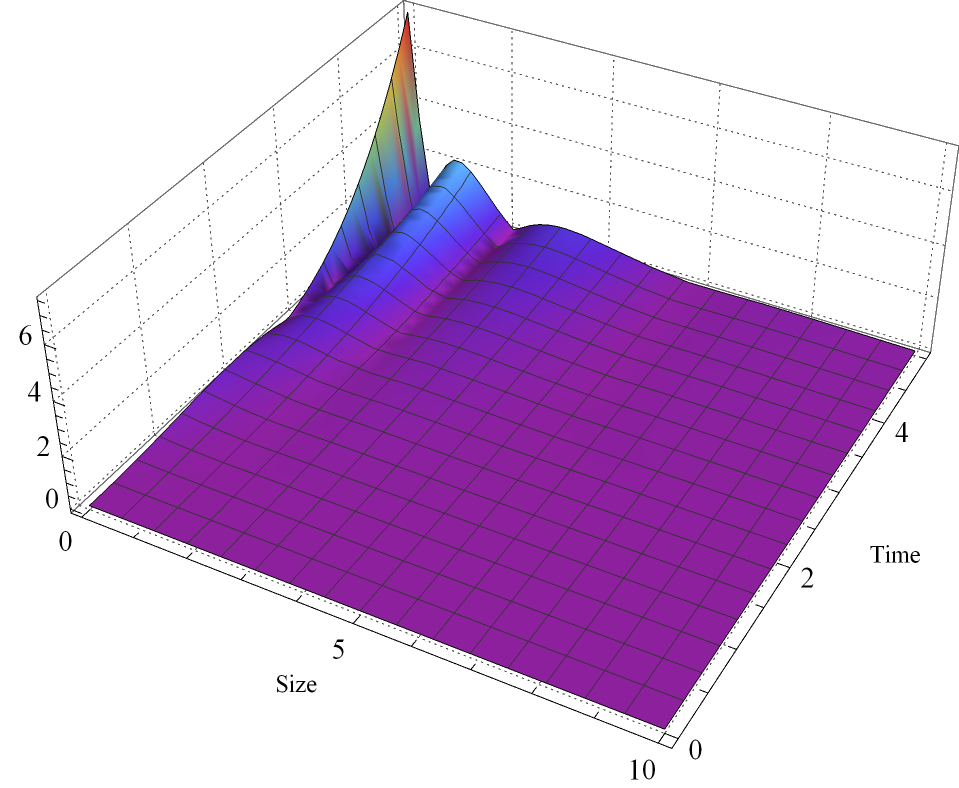}}	
	\caption{Absolute errors for Example \ref{q1}}
	\label{fig3}
\end{figure}
To justify more, Figure \ref{fig3} illustrates 3-dimensional error plots for the iterative methods under consideration. The results depicted in the figure reinforce the observations made in Figure \ref{fig2} (b) and Table \ref{table1}.
	\begin{figure}[htb!]
	\centering
	\subfigure[Zeroth moment]{\includegraphics[width=0.40\textwidth]{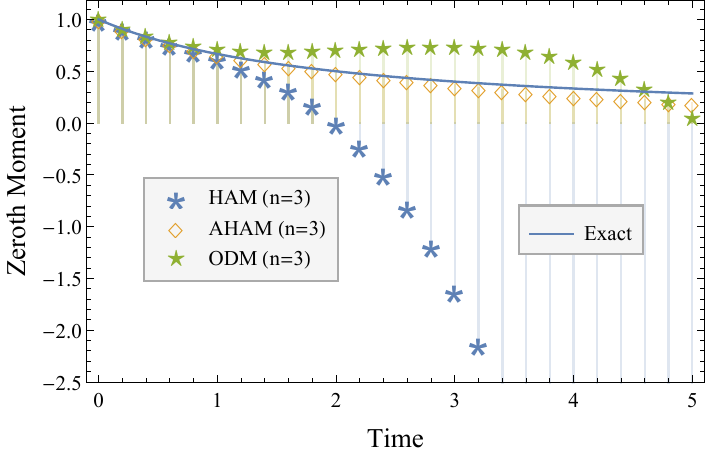}}
	\subfigure[Second moment ]{\includegraphics[width=0.40\textwidth]{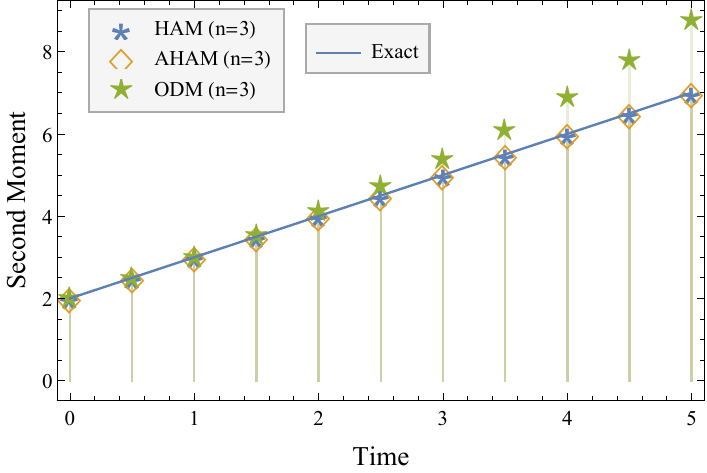}}
	\caption{Zeroth and second moments for Example \ref{q1}}
	\label{fig4}
\end{figure}
As mentioned earlier, zeroth, first and second moments play a vital role in comprehending the physical characteristics of the model. These moments are displayed and comparison are made with the precise ones. The zeroth moments shown in Figure \ref{fig4}(a) demonstrate that the number of particles in a coagulation system decreases with time.
Also, it can be observed that AHAM provides more accurate approximations as compared to HAM which tends to underpredict the result and deviates exponentially from the exact one. Although ODM offers a much better approximation than HAM, it still suffers from the deviations. As depicted in Figure \ref{fig4}(b), AHAM stands out as the superior option, as both AHAM and HAM align closely with the exact second moment, while ODM fails to provide a satisfactory estimate.  Here, it is pertinent to mention that the system preserves the mass, so the first moment is constant throughout the time, hence the plot is omitted.


	\begin{example}\label{q2}
Let us assume aggregation equation \eqref{agg} with sum kernel $w(s,\xi)=s+\xi$ and exponential initial condition $c_{0}(s)=e^{-s}$. The sum kernel has extensive applications in the pharmaceutical industry, particularly for modeling granulation and dairy processes. The exact number density is delineated in \cite{scott1968analytic} as
	\begin{align*}
		c (s, \tau)=\frac{e^{(e^{-\tau}-2)s-\tau} I_{1}(2\sqrt{1-e^{-\tau}}s)}{\sqrt{1-e^{-\tau}}s},
	\end{align*}
 where $I_{1}$ is the modified Bessel's function of the first kind.
	Thanks to equation (\ref{iteragg}), first few iterations are obtained as follows  
	\begin{align*}
		\mu_{0} =& e^{-s},\quad
		\mu_{1} =h \tau e^{-s} \left(s+1-\frac{s^2}{2} \right) ,\\
		\mu_{2} =&\frac{1}{720} h \tau e^{-s} \bigg(h^2 \tau^2 s (s (120-s (s ((s-10) s-20)+240))+240)+60 h \bigg(\tau \left((s-6) s \left(s^2-3\right)+6\right)-6 (s-2) s\\&+12\bigg)-360 ((s-2) s-2)\bigg).
	\end{align*}
\end{example}
The coefficients become complicated as we go along, but one can use MATHEMATICA to calculate the higher terms using Eq. \ref{iteragg}.
\begin{figure}[htb!]
	\centering
	\subfigure[Number density at $\tau$ = 2 ]{\includegraphics[width=0.40\textwidth]{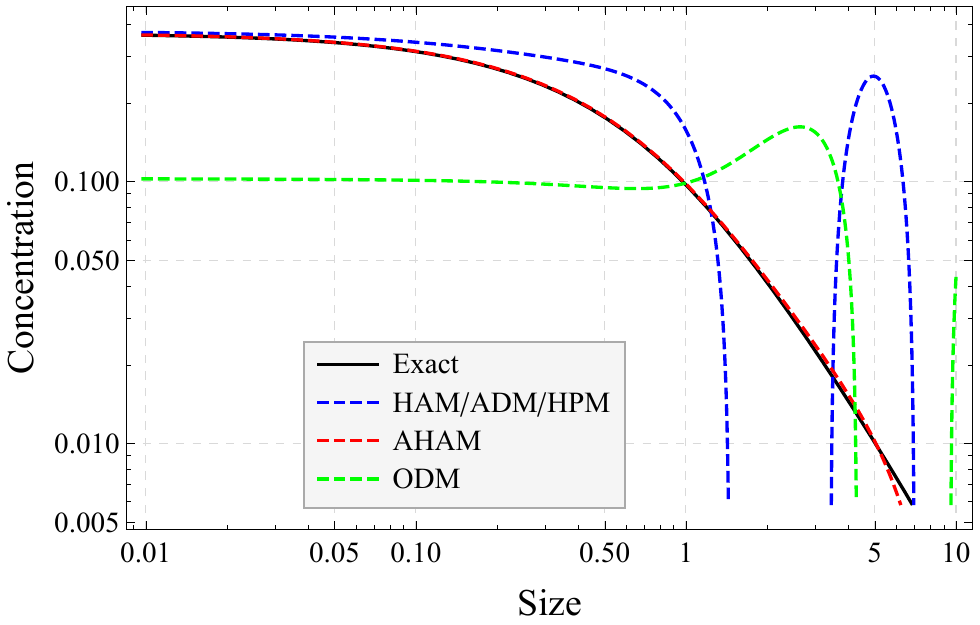}}
	\subfigure[Error at $s$ = 5 ]{\includegraphics[width=0.40\textwidth]{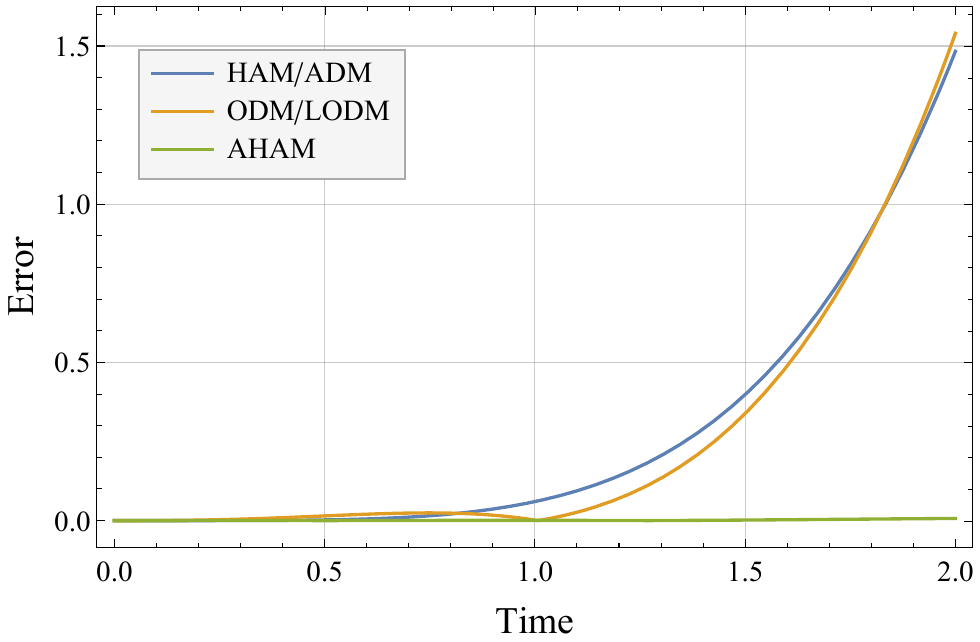}}
	\caption{Number density and error plots for Example \ref{q2}}
	\label{fig5}
\end{figure}
Figure \ref{fig5}(a) illustrates the number density using four-term approximate solutions across various schemes with the exact solution. The figure highlights that the HAM and ODM solutions display deviation compared to the precise solution. In contrast, the AHAM solution precisely aligns with the exact one. Additionally, as depicted in Figure \ref{fig5}(b), the errors derived from HAM and ODM are nearly identical and negligible for a constant value of $s$ and up to $\tau = 1$. However, with the progression of time, the errors generated by these two schemes start to escalate almost exponentially while AHAM consistently performs well.
	\begin{figure}[htb!]
	\centering
	\subfigure[AHAM error $(n=4)$]{\includegraphics[width=0.32\textwidth]{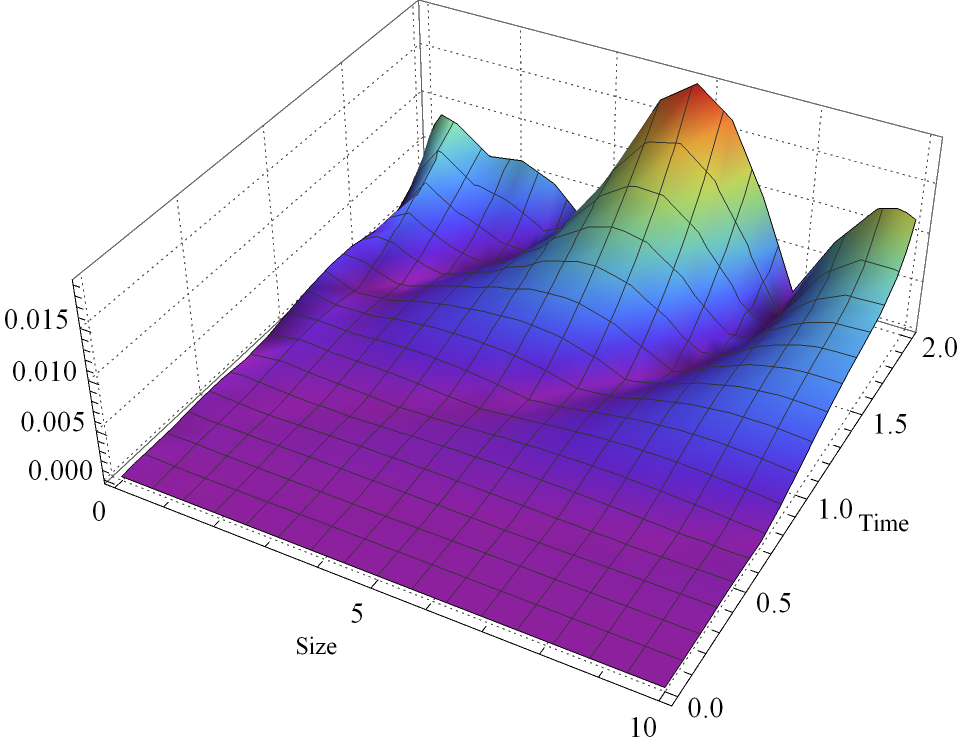}}
	\subfigure[HAM error $(n=4)$]{\includegraphics[width=0.32\textwidth]{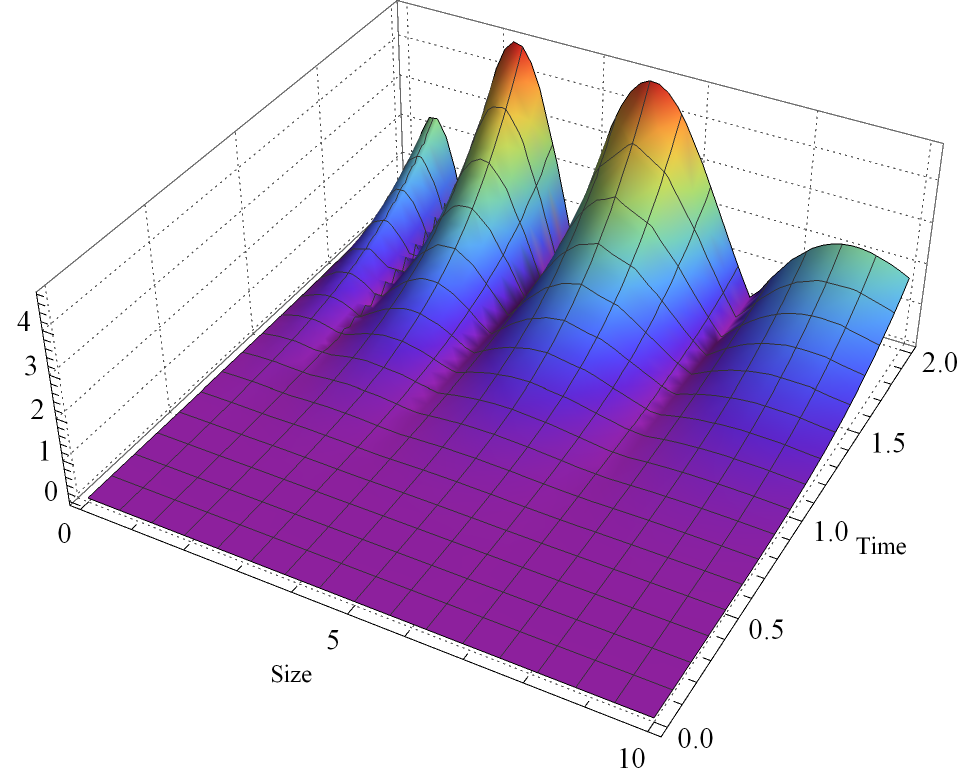}}
	\subfigure[ODM error $(n=4)$]{\includegraphics[width=0.32\textwidth]{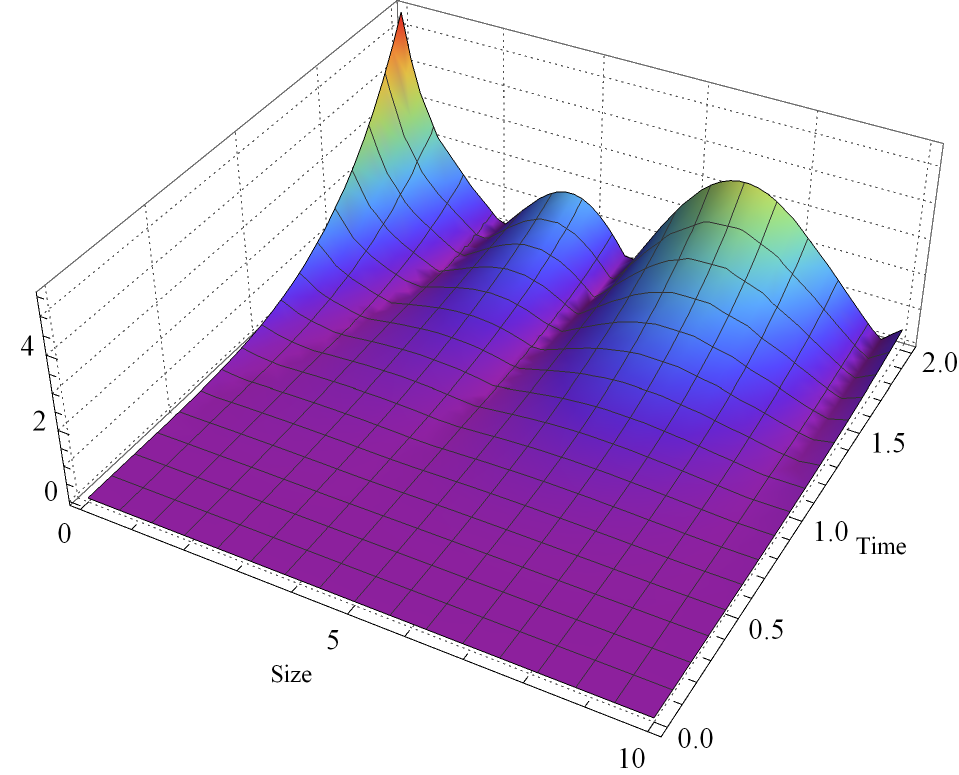}}	
	\caption{AHAM, HAM and ODM errors for Example \ref{q2}}
	\label{fig6}
\end{figure}
Further in Figure \ref{fig6}, 3-dimensional representations of the errors are presented, displaying that the proposed AHAM delivers exceptional accuracy over larger time intervals as compared to the HAM as well as ODM.
	\begin{figure}[htb!]
	\centering
	\subfigure[Zeroth moments]{\includegraphics[width=0.40\textwidth]{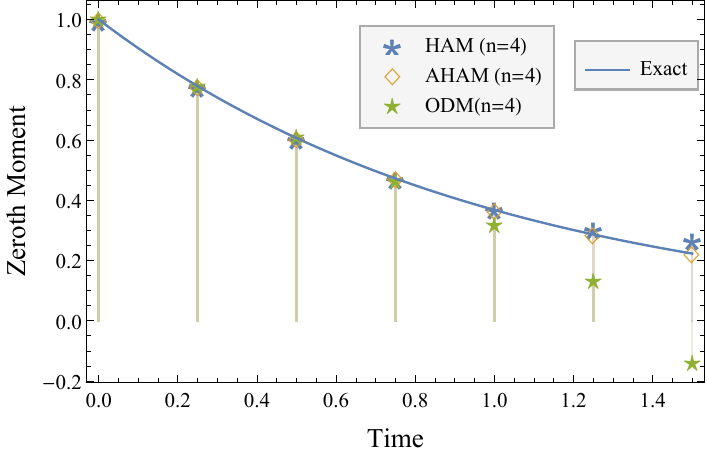}}
	\subfigure[Second moments ]{\includegraphics[width=0.40\textwidth]{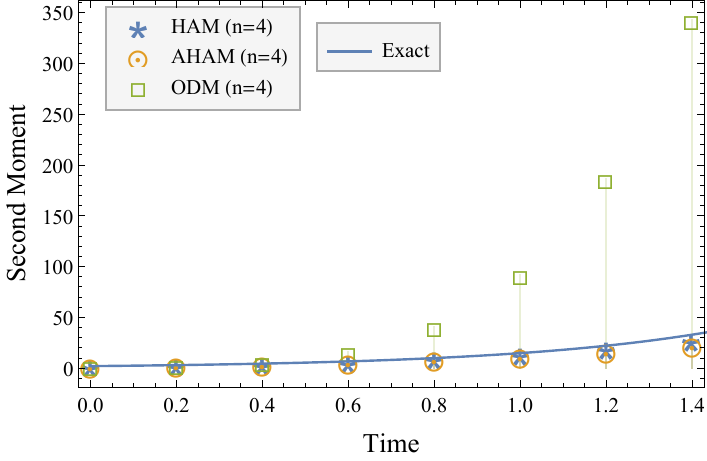}}
	\caption{Zeroth and second moments plots for Example \ref{q2}}
	\label{fig7}
\end{figure}
In Figure \ref{fig7}, the exact and approximated moments calculated via all three methods are compared. It is observed that ODM under-predicts the zeroth moment and over-predicts the second moment. In contrast, both HAM and AHAM provide almost identical results, delivering an excellent approximation to both the precise moments.

	\begin{example}\label{q3}
Taking the aggregation equation \eqref{agg} with $c_{0}=e^{-s}$ and product kernel $w(s, \xi) = s\xi$, the exact solution is given as  
	\begin{align*}
			c (s, \tau)=e^{-(1 + \tau)s} \sum_{k=0}^{\infty}\frac{\tau^k s^{3k}}{(k+1)!\Gamma(2k+2)}.
	\end{align*}
The product kernel facilitates research on coagulation behavior at various particle sizes, including colloidal suspensions. It is worth noting that the product kernel is important as a physically relevant kernel, especially in dairy sciences, where it is used to simulate the rennet-induced coagulation process. \\
	Following the recursive scheme defined in equation \eqref{iteragg}, the first few components of the AHAM solution are calculated as
	\begin{align*}
		\mu_{0} =& e^{-s} ,\quad
		\mu_{1} =h \tau e^{-s}s \left( 1-\frac{s^3}{12} \right) ,\\
		\mu_{2} =&-\frac{1}{544320}h \tau e^{-s} s \bigg(h^2 \tau^2 \left(s^4-144 s^2+3024\right) s^4-756 h \left(s \left(\tau \left(s^4-60 s^2+360\right)-60 s\right)+720\right)\\&+45360 \left(s^2-12\right)\bigg).
	\end{align*}
\end{example}
In this instance, numerical experiments demonstrate high accuracy over extended time when a three-term iterative solution is used.
\begin{figure}[htb!]
	\centering
	\subfigure[Number density at $\tau$ = 2]{\includegraphics[width=0.40\textwidth]{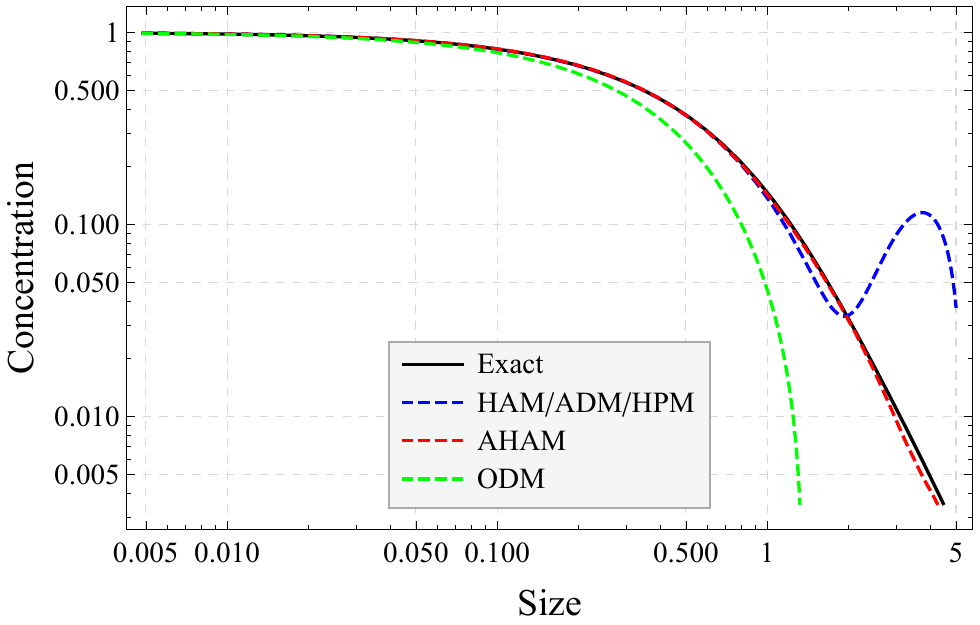}}
	\subfigure[Error at $s$ = 5 ]{\includegraphics[width=0.40\textwidth]{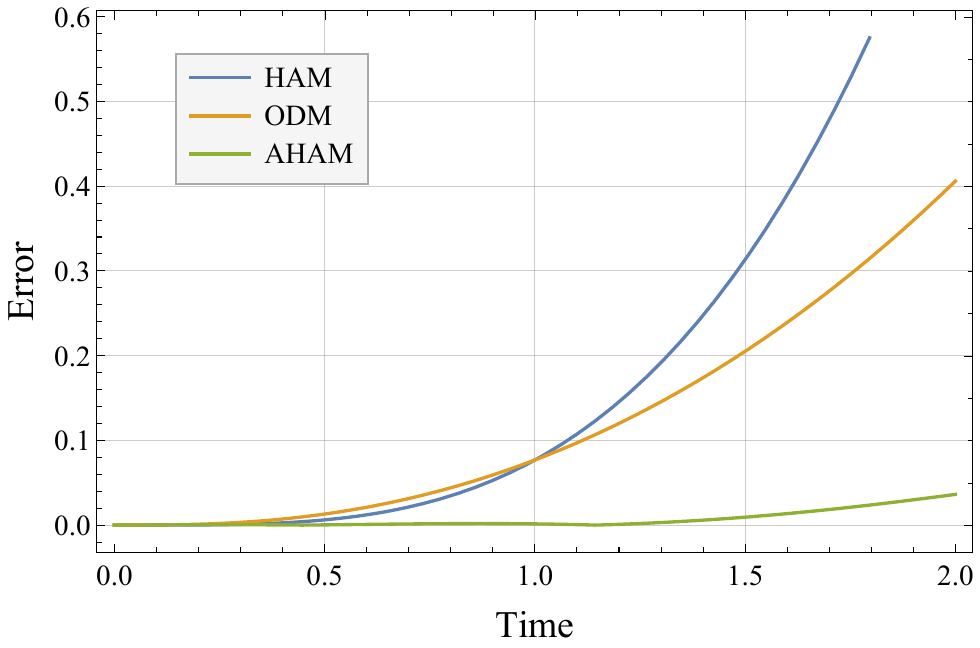}}
	\caption{ Number density and error plots  for Example \ref{q3}}
	\label{fig8}
\end{figure} 
  In Figure \ref{fig8}(a), the concentration of particles at time $\tau = 2$ is represented, indicating that solutions obtained through HAM and ODM experience blow-up, whereas the AHAM solution matches closely with the exact number density. Figure \ref{fig8}(b) shows analogous observations for the absolute error.
\begin{figure}[htb!]
	\centering
	\subfigure[Zeroth moments]{\includegraphics[width=0.40\textwidth]{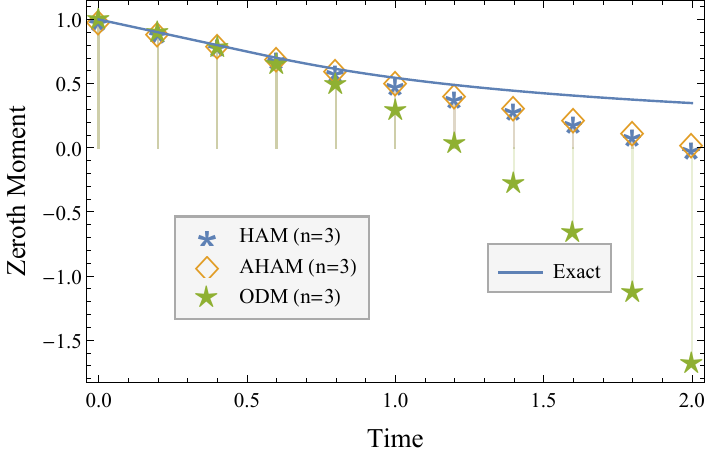}}
	\subfigure[Second moments ]{\includegraphics[width=0.40\textwidth]{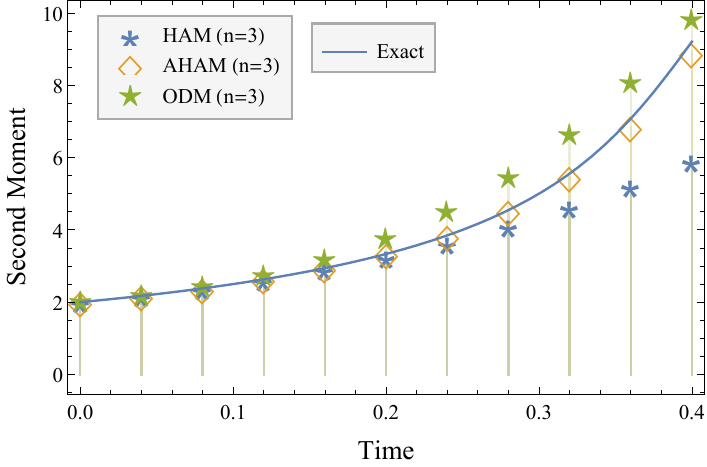}}
	\caption{Zeroth and second moments for Example \ref{q3}}
	\label{fig9}
\end{figure}
Moving further, Figure \ref{fig9} compares the analytical and estimated moments. As observed in the previous cases, ODM moments explode, whereas the moments derived from HAM and AHAM are closely aligned and converging toward the precise moments.
\begin{example}\label{q4}
		Here we are considering a Gaussian-like distribution, i.e, gamma initial distribution defined by 
		\begin{align}\label{key}
		c_{0}(s)= (1+\nu)^{1+\nu} s^{\nu}\frac{e^{-(1+\nu)s}}{\sqrt{1+\nu}}
	\end{align} with sum kernel.
For simplicity, we are assuming the value of $\nu$ is 1, but any other value of $\nu$ can also be considered. The exact solution of this problem is available in \cite{scott1968analytic}.
 Using the iterative scheme defined in equation \ref{iteragg}, iterative solutions are obtained as follows
	\begin{align*}
		\mu_{0} =& 4se^{-2s} ,\quad
		\mu_{1} =4h \tau e^{-2 s}s \left(s+1-\frac{s^3}{3}\right) ,\\
		\mu_{2} =&\frac{2 h \tau e^{-2 s} s}{8505} \bigg(-2 h^2 \tau^2 s \left(s \left(s \left(s \left(s \left(s \left(s^3-36 s-126\right)+189\right)+1890\right)+945\right)-2835\right)-2835\right)\\&+189 h \tau \left(s \left(s \left(s \left(2 s^3-30 s-45\right)+45\right)+135\right)+45\right)-5670 (h+1) \left(s^3-3 s-3\right)\bigg).
	\end{align*}
\end{example}
For the numerical experiments, third order iterative solution is taken into consideration. 
\begin{figure}[htb!]
	\centering
	\subfigure[Number denisty ]{\includegraphics[width=0.40\textwidth]{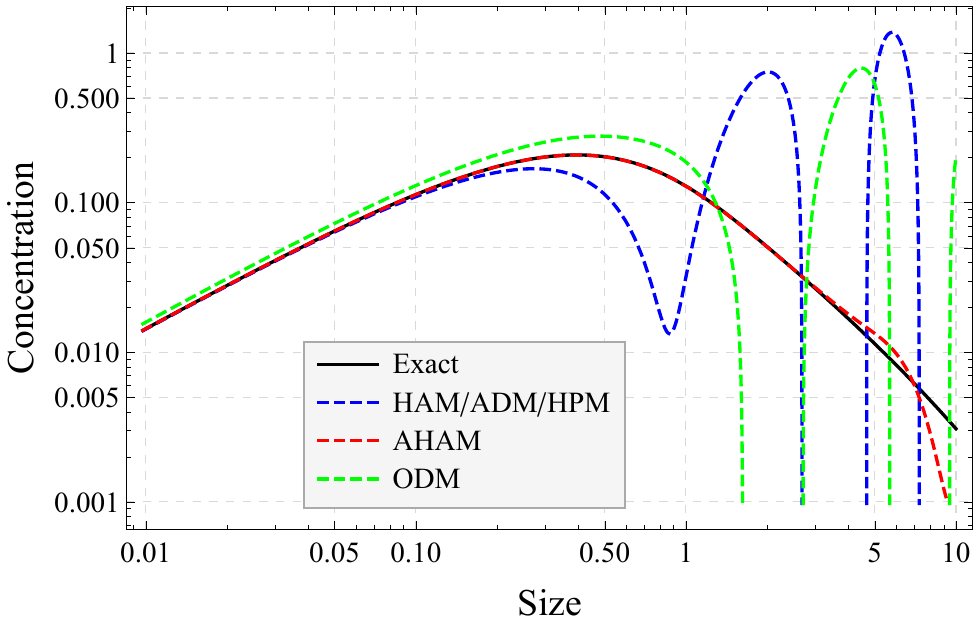}}
	\subfigure[Error ]{\includegraphics[width=0.40\textwidth]{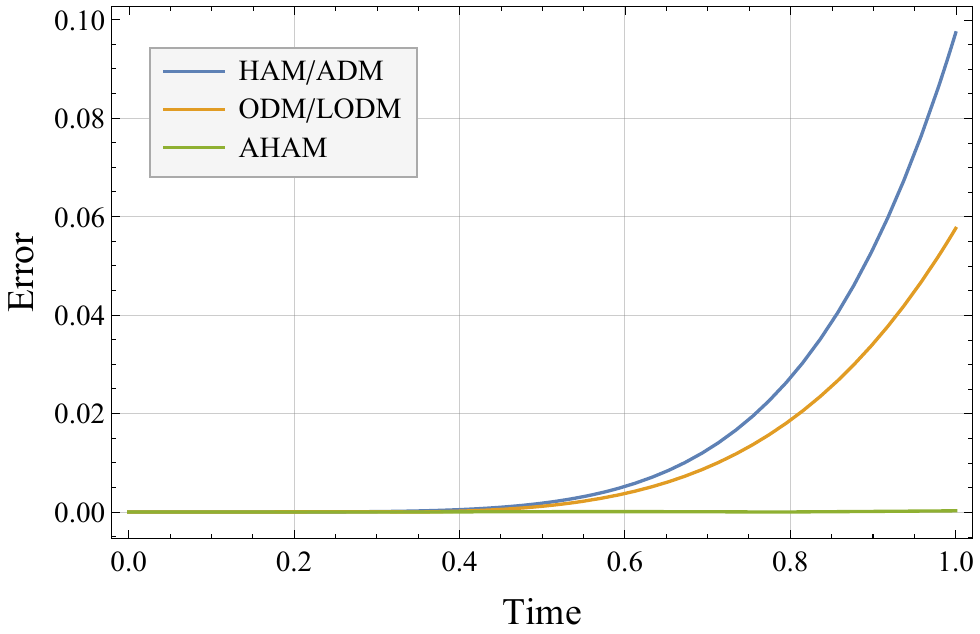}}
	\caption{Number density and error plots for Example \ref{q4}}
	\label{fig10}
\end{figure}
In Figure \ref{fig10}, the depiction illustrates the particle number density at time instance $\tau=1$. It becomes apparent that HAM and ODM exhibit reduced precision over an extended period, whereas AHAM demonstrates a substantial alignment with the precise  number density solution.
 The error due to AHAM is not only significantly smaller than the existing approximated solutions of HAM and ODM but also approaches zero, see Figure \ref{fig10}(b). 
\begin{figure}[htb!]
	\centering
	\subfigure[Zeroth moments]{\includegraphics[width=0.40\textwidth]{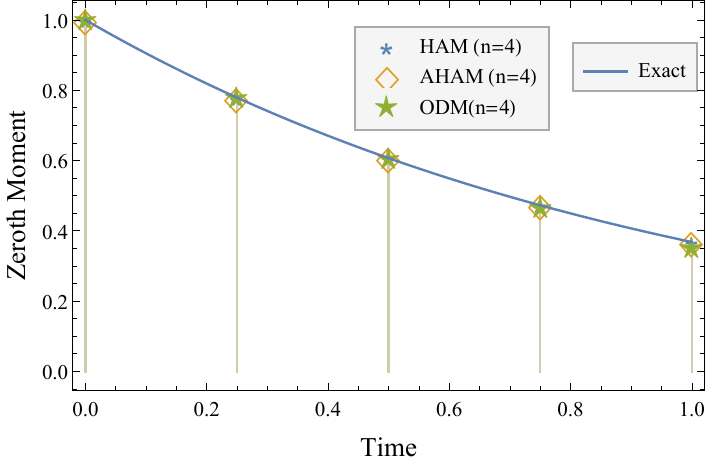}}
	\subfigure[Second moments ]{\includegraphics[width=0.40\textwidth]{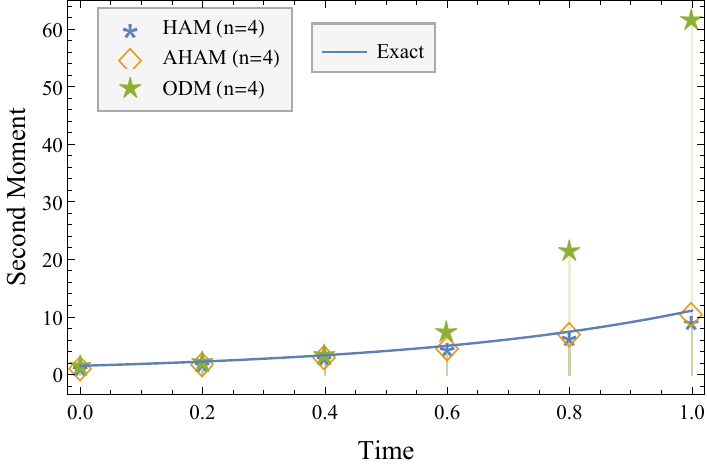}}
	\caption{Zeroth and second moments for Example \ref{q4}}
	\label{fig11}
\end{figure}
Moving further, approximated and exact moments are compared for all three methods in Figure \ref{fig11}. Remarkably, ODM under and over predicts the zeroth and second moments, respectively. Whereas the outcomes from HAM and AHAM exhibit a remarkable congruence, presenting almost identical results. These schemes offer an extremely accurate approximation of the exact zeroth and second moments.
	\begin{example}\label{q5}
We now examine a complicated coagulation kernel that for Brownian coagulation in the free-molecular regime which is dependent on size and was obtained from gas kinetic theory \cite{lee1984log}. Here, the Brownian kernel is transformed into an integrable form as $$w(s,\xi)=b_{k}(s^{1/3}+\xi^{1/3})^{2}\frac{1}{\sqrt{s}+\sqrt{\xi}}$$
where, $k$ denotes the $k^{th}$ moment equation.
Taking the Gaussian initial distribution given in equation \eqref{key} with $\nu$ = 1 and correction factor
$b_{k}$ = 0.7071, the AHAM solution terms are obtained as
	\begin{align*}
		\mu_{0} =& 4se^{-2s} ,\quad
		\mu_{1} =h \tau  e^{-2 s} \bigg( 5.97274 s^{4/3}+3.54487 s^{5/3}+5.34574 s^{5/6}+2.8284 s^{7/6}+2.7273 s+2.68082 \sqrt{s}\\&-7.09296 s^{19/6}\bigg).
		\end{align*}
\end{example}
\begin{figure}[htb!]
	\centering
	\subfigure[Initial distribution and truncated solution for
	number density ]{\includegraphics[width=0.45\textwidth]{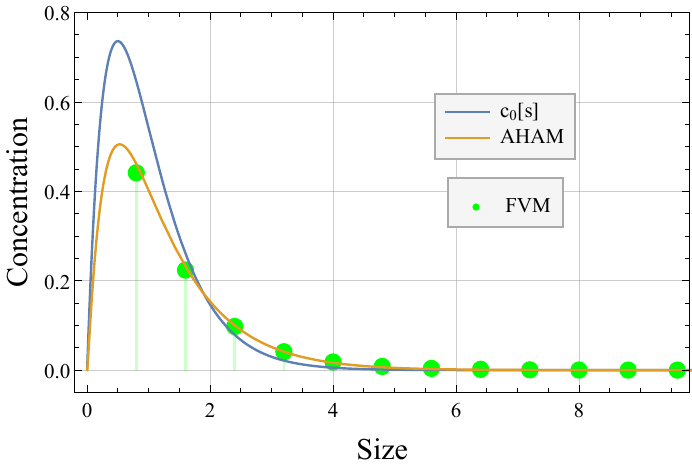}}
	\subfigure[Approximated solution (n = 3) ]{\includegraphics[width=0.45\textwidth]{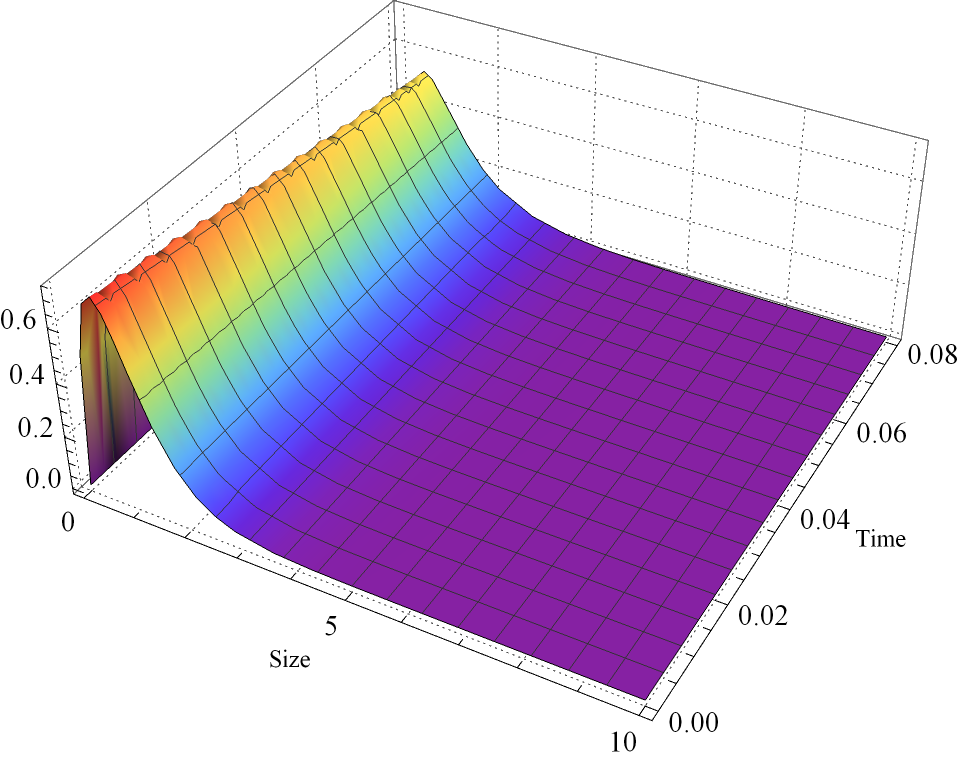}}
	\caption{Approximated solution for Example \ref{q5}}
	\label{fig16}
\end{figure}
The analytical solution for this particular problem is not available in the literature. Consequently, the approximated number density and initial distribution functions are graphically represented alongside the solution derived from the finite volume method, as illustrated in Figure  \ref{fig16} .
The graph indicates a high level of concordance between the AHAM approximation and the Finite Volume Method, demonstrating the anticipated behavior of the model.
\par Based on the illustrations provided above, it is evident that AHAM outperforms ADM, HPM, HAM, ODM and LODM in all contexts. Hence, considering the novelty of the proposed scheme, we intend to use AHAM to solve more intricate equation coupled aggregation-breakage model.
\subsection{Coupled Aggregation-Breakage Equation (CABE)}
\begin{example}\label{q6}
	Consider the CABE \eqref{agg-brk}, with constant aggregation kernel  $w(s,\xi)=1$, binary breakage kernel $\beta(s, \xi) = \frac{2}{\xi}$ and linear selection rate $S(s) = s/2$ with initial condition $c_{0}(s) = 4se^{-2s} $. The exact solution to this problem is available in \cite{lage2002comments}.
	This is the case in which the overall particle count remains constant.
	Using the iterative scheme, first few iterations are obtained as follows
		\begin{align*}
		\mu_{0} =& 4se^{-2s} ,\quad
		\mu_{1} =h \tau e^{-2 s}\left(s^2(\frac{-4s}{3}+2)+ 2 s-1\right) ,\\
		\mu_{2} =&\frac{h \tau e^{-2 s}}{3780} \bigg(2 h^2 \tau^2 s (2 s (s (s (s (21-2 (s-7) s)-210)+105)+315)-315)+63 h \bigg(\tau s \bigg(2 (s-5) s \left(4 s^2-15\right)\\&+15\bigg)-30 \tau+40 s (s (3-2 s)+3)-60\bigg)-1260 (2 s (s (2 s-3)-3)+3)\bigg).
	\end{align*}
\end{example}
Thanks to "MATHEMATICA", one can compute the higher order terms using equation \eqref{iter3}. 
\begin{figure}[htb!]
	\centering
	\subfigure[Error at $\tau$= 1.5 ]{\includegraphics[width=0.40\textwidth]{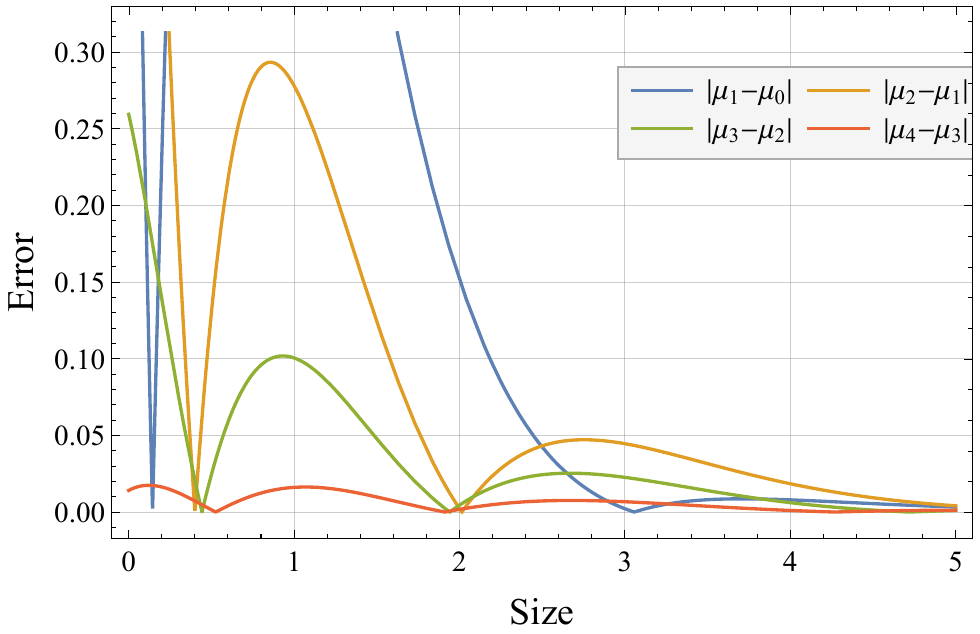}}
	\subfigure[Number density  $(n=4)$]{\includegraphics[width=0.40\textwidth]{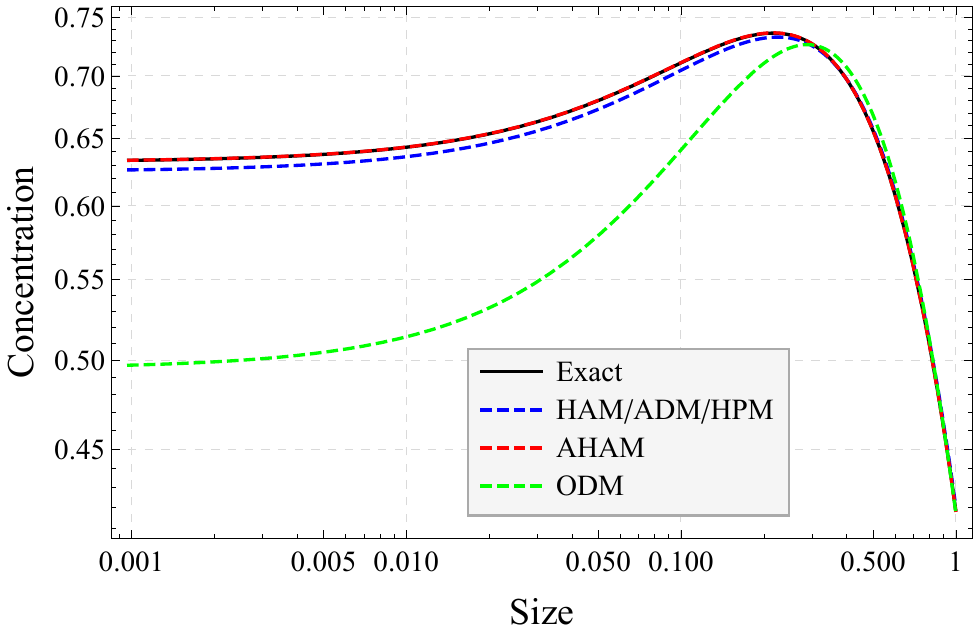}}
	\subfigure[Time distribution ]{\includegraphics[width=0.40\textwidth]{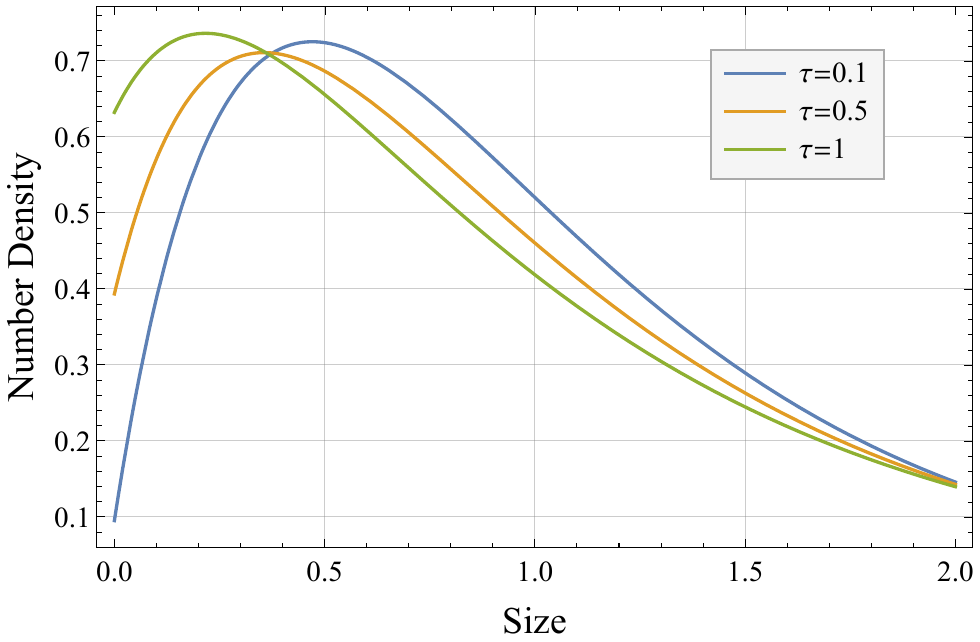}}
	\caption{Number density for Example \ref{q6}}
	\label{fig12}
\end{figure}
Figure \ref{fig12}(a) gives the absolute difference between the subsequent components of the iterative solution. As shown in figure, the consecutive difference among the terms diminishes consistently, and the difference between the third and fourth components is almost inconsequential. This supports truncating the iterative solution at the fourth term. Figure \ref{fig12}(b) illustrates the concentration of particles across various schemes with the exact solution. The figure shows that the AHAM solution precisely aligns with the exact one as compared to HAM and ODM. Figure \ref{fig12}(c) highlights the number density in the system at various time scales. As time passes, smaller particles increase in population while larger particles decrease due to  higher rate of fragmentation over aggregation.
\begin{figure}[htb!]
	\centering
		\subfigure[Truncated error ]{\includegraphics[width=0.40\textwidth]{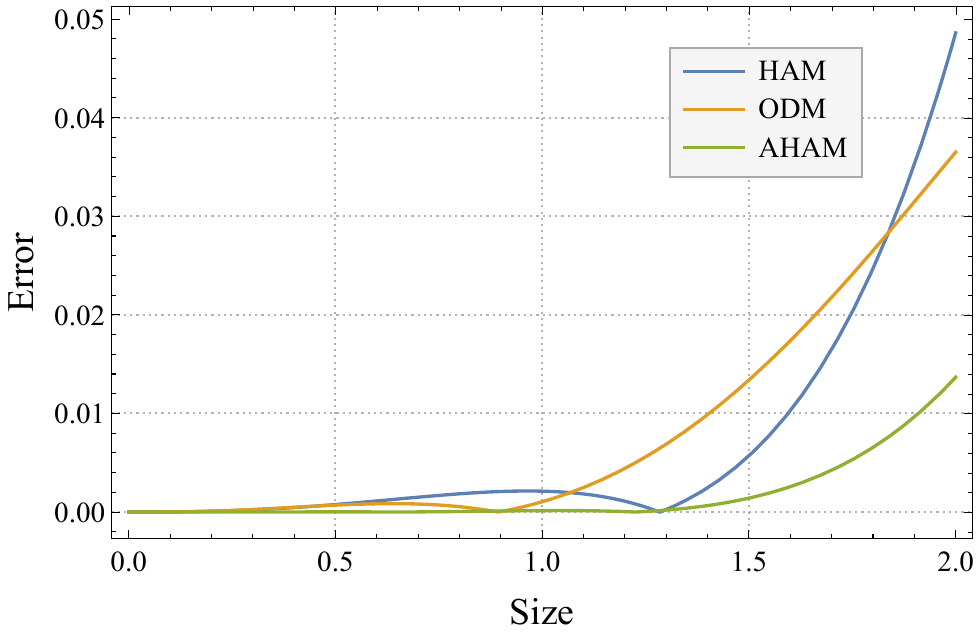}}
		\subfigure[Zeroth moment ]{\includegraphics[width=0.40\textwidth]{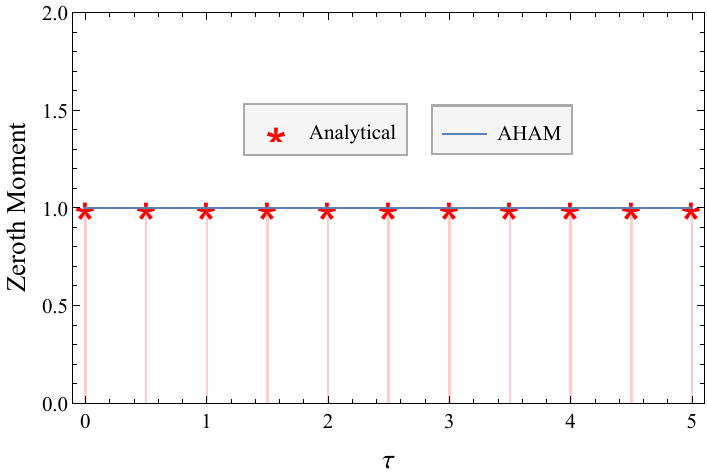}}
	\caption{Error and moment for Example \ref{q6}}
	\label{fig13}
\end{figure}
 Consequently, Figure \ref{fig13}(a) presents the four-term error, demonstrating that the error between the approximated and analytical solutions is negligible, even over a reasonable duration.
The steady-state behavior of the truncated solution concerning the number of particles is illustrated in Figure \ref{fig13}(b), where the zeroth moment remains constant.\\
    In addition, Table \ref{table2} includes numerical errors associated with the AHAM at discrete time instances ($\tau$ = 0.5, 1.0, 1.5, and 2) for different term solutions. To calculate these errors, partition the computional range [0, 10] is partitioned into $K$ subintervals $[s_{j-1/2}, s_{j+1/2}]$, with $j = 1:K$. Each interval is represented by its midpoint $s_{j}$ and the error is computed using the formula:
\begin{align}\label{PARA}
	\text{Error} = \sum_{j=1}^{K}|\mu_n^j-c_j|h_j,
\end{align}  
where $c_j=c(s_j,\tau)$  and $\mu_n^j=\mu_n(s_j,\tau)$ are the exact and series approximated solutions at point $s_j$ with step size $h_j=s_{j+1/2}-s_{j-1/2}$. The results are visualized by taking $K=1000$ and uniform mesh width $h_j=0.01$. 
	\begin{table}
	\begin{center}
			\begin{tabular}{ p{0.5cm}| p{2.5cm} p{2.5cm} p{2.5cm} p{2.5cm}   }
			\hline
			\multirow{2}{*} {$\tau$} &\multicolumn{4}{c}{Number of terms} \\
			\cline{2-5}
			& 2 & 3 & 4 & 5 \\ \hline
			& & & &   \\ 
			0.5 & 1.7944 E-3  & 2.1022 E-4  & 3.13097E-5  & 4.50 E-6    \\ 
			& & & &   \\ 
			1.0 & 2.78838 E-2  & 3.41921 E-3 &  4.32932E-4  & 1.27E-5    \\
			& & & &  \\ 
			1.5 &1.13674E-1  &3.15863 E-2  &6.68227E-3  &8.65E-4    \\
			& & & &   \\ 
			2.0 & 2.68192 E-1 & 1.22675E-1 & 4.17254E-2  & 3.28E-3  \\
			& & & &   \\ 
			\hline
		\end{tabular}
	\end{center}
	\caption{Error distribution at different time level for $n=2$ to $5$ for Example 4.6.}
	\label{table2}
\end{table}
In the realm of Table \ref{table2}, the error gradually increases over time. However, it is possible to reduce this error by incorporating more approximations into the series solution. These findings demonstrate that AHAM is highly consistent and effective, even for CABE. 
\begin{example}\label{q7}
Considering the CABE \eqref{agg-brk} with the same parameters as in Example \ref{q6} but with selection rate $S(s) = 2s$ and the initial condition $c_{0}(s) = 32se^{-4s}$. Again, the steady state precise solution to this problem is available in \cite{lage2002comments}, wherein the zeroth moment remains invariant.
Using the iterations defined in equation \eqref{iter3}, we have
	\begin{align*}
		\mu_{0} =& 32se^{-4s} ,\quad
		\mu_{1} =h \tau e^{-4 s}\left(32s(-\frac{8s^2}{3}+2s+1) -8\right) ,\\
		\mu_{2} =&\frac{8}{945} h \tau e^{-4 s} \bigg(-4 h^2 \tau^2 s (4 s (2 s (4 s (s (4 s (2 s-7)-21)+105)-105)-315)+315)+63 h \bigg(\tau s \bigg(4 s (2 s-5) \bigg(16 s^2\\&-15\bigg)+15\bigg)-15 \tau+20 s \left(-8 s^2+6 s+3\right)-15\bigg)-315 \left(4 s \left(8 s^2-6 s-3\right)+3\right)\bigg).
	\end{align*}
\end{example}
Continuing in a similar fashion, a third order iterative solution is computed and considered for the numerical simulations.
\begin{figure}[htb!]
	\centering
	\subfigure[Number density $(n=3)$]{\includegraphics[width=0.40\textwidth]{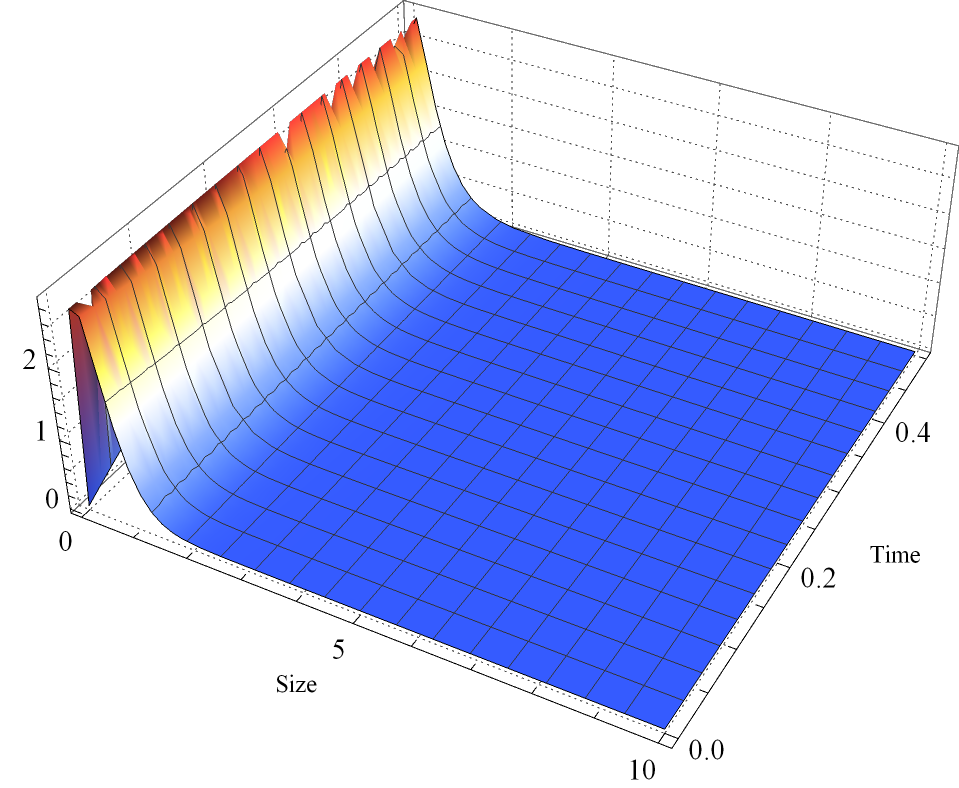}}
	\subfigure[Time distribution ]{\includegraphics[width=0.40\textwidth]{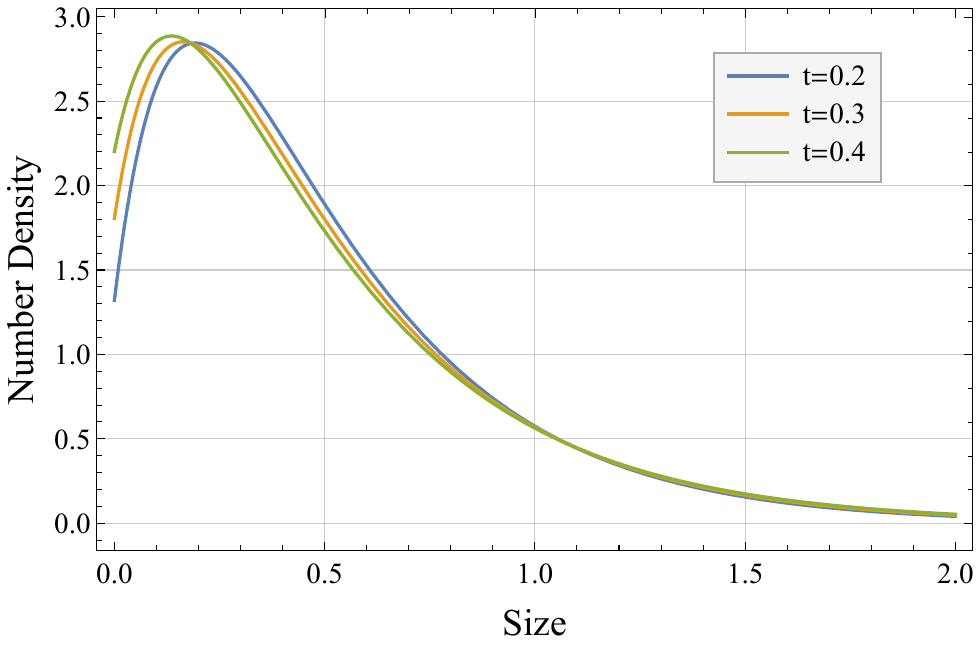}}
	\caption{Number density plots for Example \ref{q7}}
	\label{fig14}
\end{figure}

 Figure \ref{fig14} illustrates the density distribution of particles in the system at various time levels. As time progresses, an increase in the number of smaller particles is observed, while larger particles begin to fragment.
 The difference between consecutive terms is illustrated in Figures \ref{fig15}(a) and \ref{fig15}(b). Remarkably, the error begins diminishing between the second and third terms, prompting us to abbreviate the solution for three terms.
 As anticipated, in Figure \ref{fig15}(c), AHAM exhibits the steady-state character of the zeroth moment and is exactly matching with the exact total number of particles.
\begin{figure}[htb!]
	\centering
	\subfigure[Error at $\tau$ = 0.5 ]{\includegraphics[width=0.40\textwidth]{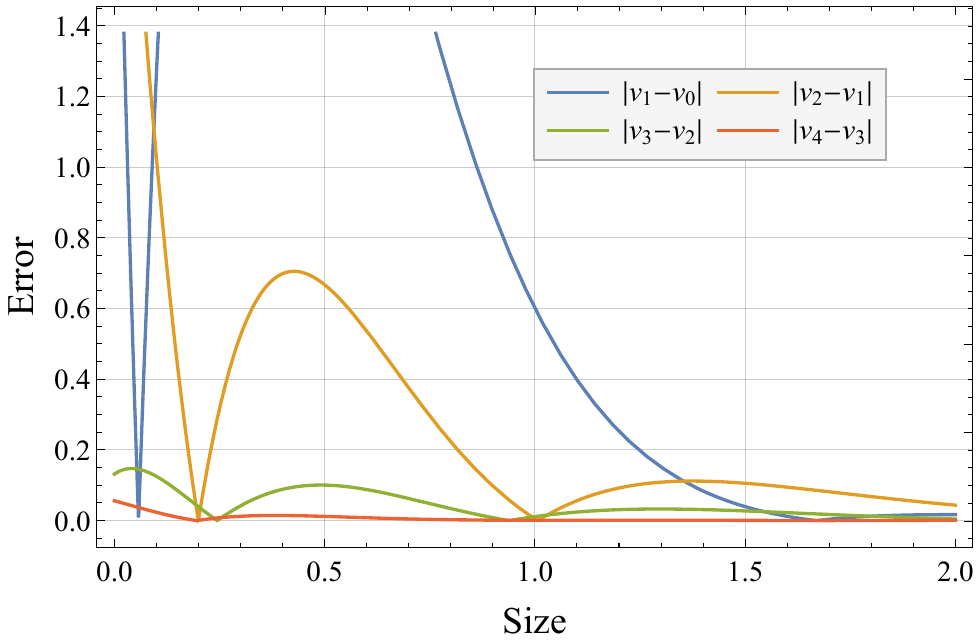}}
	\subfigure[Absolute error ]{\includegraphics[width=0.40\textwidth]{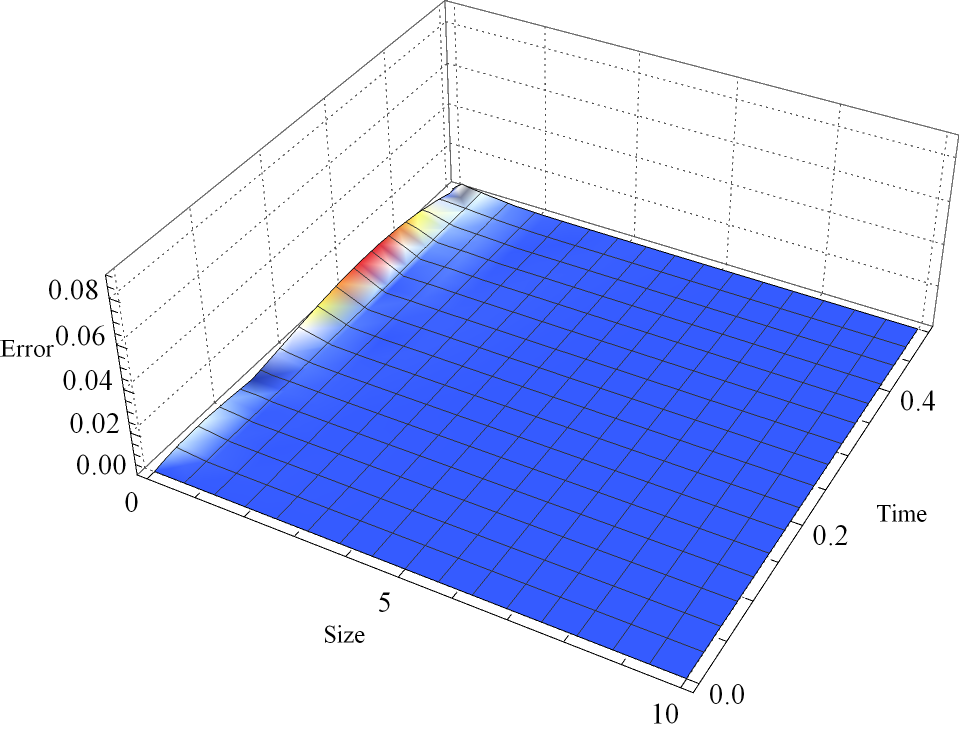}}
		\subfigure[Zeroth moment ]{\includegraphics[width=0.40\textwidth]{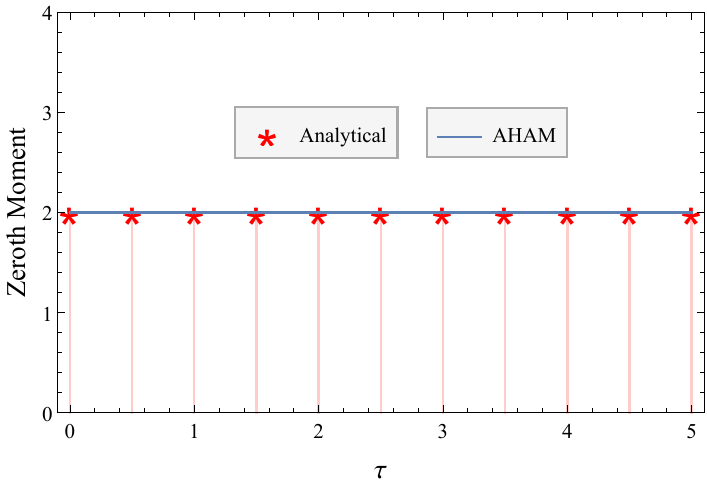}}
	\caption{Error and moment for Example \ref{q7}}
	\label{fig15}
\end{figure}

				\section{Concluding Remarks}
	This study aimed to introduce AHAM, a novel semi-analytical technique rooted in HAM, designed to enhance method efficiency. A comprehensive comparison of AHAM with existing semi-analytical methods, including ADM, ODM, LODM, HPM, and HAM, was conducted, addressing the aggregation breakage equation with various coagulation kernels. Correlations between approximated and exact number density functions and moments were visually and quantitatively presented in figures and tables. Notably, AHAM exhibited superior reliability compared to existing techniques, significantly enhancing HAM's efficiency. The series solutions, accurately replicating analytical solutions where available, demonstrated the anticipated behavior of the model.\\	
	The article also underscored the theoretical convergence analysis of AHAM for both pure aggregation and combined aggregation breakage equations. The comprehensive observations lead to the conclusion that AHAM performed exceptionally well for these specific aggregation breakage equations, suggesting its applicability to studying the behavior of other population balance models with integro-partial differential characteristics.		
					\bibliography{article}
					\bibliographystyle{ieeetr}
				\end{document}